\renewcommand{\baselinestretch}{\baselinestretch}
\renewcommand{\baselinestretch}{1.1}
\numberwithin{equation}{section}
\newtheorem{thm}{Theorem}[section]
\newtheorem{lem}[thm]{Lemma}
\theoremstyle{definition}
\theoremstyle{remark}
\newtheorem{rmk}[thm]{Remark}
\newtheorem*{claim}{Claim}
\numberwithin{equation}{section}
\newcommand{\ra}{{\, \rightarrow \,}}
\newcommand{\z}{{\mathbb Z}}
\newcommand{\q}{{\mathbb Q}}
\newcommand{\n}{{\mathbb N}}
\newcommand{\e}{{\epsilon}}
\newcommand{\oo}{{\mathcal{O}}}
\newcommand{\ok}[1]{{\mathcal{O}_K^{#1}}}
\newcommand{\Mod}[1]{\ (\mathrm{mod}\ #1)}
\newcommand{\tr}{\operatorname{Tr}}
\newcommand{\well}{\widetilde{\ell}}
\begin{document}

	
	\author{Daejun Kim}
	\address{School of Mathematics, Korea Institute for Advanced Study,
		Seoul 02455, Republic of Korea}
	\email{dkim01@kias.re.kr}
	
	\thanks{This work of the first author was supported by a KIAS Individual Grant (MG085501) at Korea Institute for Advanced Study.}
	
	\author{Seok Hyeong Lee}
	\address{Center for Quantum Structures in Modules and Spaces, Seoul National University, Seoul 08826, Republic of Korea}
	\thanks{The second author was supported by the National Research Foundation of Korea(NRF) grant funded by the Korea government(MSIT) (No.2020R1A5A1016126).}
	\email{lshyeong@snu.ac.kr}

	\subjclass[2020]{11E12, 11R16, 11H06, 11H55}
	
	\keywords{Universal quadratic forms, Totally real cubic number fields, Geometry of numbers}
	
	\thanks{}
	
	
	\title[Lifting problem for universal quadratic forms]{Lifting problem for universal quadratic forms over totally real cubic number fields}

	\begin{abstract} 
		Lifting problem for universal quadratic forms asks for totally real number fields $K$ that admit a positive definite quadratic form with coefficients in $\mathbb{Z}$ that is universal over the ring of integers of $K$. In this paper, we show that $K=\q(\zeta_7+\zeta_7^{-1})$ is the only such totally real cubic field. Moreover, we show that there is no such biquadratic field.
	\end{abstract}
	
	\maketitle

	\section{Introduction}
	
	Representations of integers by a given quadratic form have a long and storied history. One particular interest has been universal quadratic forms, positive definite forms (with integer coefficients) that represent all positive integers. Lagrange provided the first example of a universal form by showing his Four Square Theorem, which says that every positive integer is a sum of at most four squares of integers. In other word, the form $x^2+y^2+z^2+w^2$ is universal over $\z$. The task of classifying all universal forms has been achieved by two significant theorems: $15$-theorem of Conway-Schneeberger \cite{B}, and $290$-theorem of Bhargava-Hanke \cite{BH}.
	
	The study of universal quadratic form has been extended to include totally real number fields $K$. In this setting, we consider positive definite quadratic forms having coefficients in the ring $\ok{}$ of integers of $K$. A such quadratic form is said to be universal over $\ok{}$ if it represents all totally positive elements of $\ok{}$. The results in the paper of Hsia, Kitaoka, Kneser \cite{HKK} imply that there exists a universal quadratic form over $\ok{}$ for any totally real number field $K$. Subsequently, numerous results concerning them have been studied, see, for example \cite{BK1, BK2, CLSTZ, CI, CKR, D1, D2, Ea, EK, Ka1, KS, KT, KY, KY2, KYZ, Ki1, Ki2, KKP, KTZ, Sa, Ya}. We refer the interested readers to a survey paper of Kala \cite{Ka2} for the recent developments on universal quadratic forms (and lattices) over the rings of integers of totally real number fields, and the references therein.

	In 1941, Maa$\ss$ \cite{M} proved that the sum of three squares is universal over $K=\q(\sqrt{5})$. Later, in 1945, Siegel \cite{Sie1} showed that the sum of any number of squares is universal over $\ok{}$ only if $K=\q,\q(\sqrt{5})$. This result immediately implies that any positive definite diagonal quadratic form with integer coefficients can be universal over $\ok{}$ only if $K=\q,\q(\sqrt{5})$.
	
	Motivated by these results, Kala and Yatsyna \cite{KY} recently studied the following ``Lifting problem": When is it possible for a $\z$-form, i.e., a positive definite quadratic form with coefficients in $\z$, to be universal over $\ok{}$? In the same paper, they proved that $K=\q(\sqrt{5})$ is the only real quadratic field that admits a $\z$-form which is universal over $\ok{}$. There are several $\z$-forms that are universal over $\mathcal{O}_{\q(\sqrt{5})}$ known, such as $x^2+y^2+z^2$ \cite{M}, $x^2+y^2+2z^2$ \cite{CKR}, $x^2+xy+y^2+z^2+zw+w^2$ \cite{D1}, $x^2+y^2+z^2+w^2+xy+xz+xw$ \cite{D2}. Furthermore, Kala and Yatsyna showed the following theorem regarding the existence of universal $\z$-forms over some number fields of small degree.
	\begin{thm}[{\cite[Theorem 1.2]{KY}}] \label{thm:KY}
		There does not exist a totally real number field $K$ of degree $1,2,3,4,5,$ or $7$ which has principal codifferent ideal and a universal $\z$-form defined over $\ok{}$, unless $K=\q, \q(\sqrt{5}),$ or $\q(\zeta_7+\zeta_7^{-1})$. The $\z$-form $x^2+y^2+z^2+w^2+xy+xz+xw$ is universal over $\q(\zeta_7+\zeta_7^{-1})$.
	\end{thm}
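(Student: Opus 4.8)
The plan is to transport the representation problem over $\ok{}$ to one over $\z$ by pairing $Q$ with a generator of the codifferent against the trace, and then to match the resulting $\z$-representation identity against the arithmetic of $K$ so as to bound the discriminant $d_K$ and reduce to a finite search.

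First I set up the trace reduction. Let $Q$ be a $\z$-form of rank $m$ universal over $\ok{}$ and put $n=[K:\q]$. Using that the codifferent $\ok{\vee}=\{x\in K:\tr(x\ok{})\subseteq\z\}$ is principal I would pick a totally positive generator $\delta$ (so $\delta\ok{}=\ok{\vee}$ and $\tr(\delta\ok{})=\z$); the delicate point is upgrading principality to a \emph{totally positive} generator, a narrow-class condition the hypotheses must supply. The $\delta$-twisted trace pairing $\langle x,y\rangle=\tr(\delta xy)$ makes $\ok{}$ a \emph{unimodular} $\z$-lattice of rank $n$ (its dual is again $\ok{}$, precisely because $\delta$ generates $\ok{\vee}$), and for $n\le 7$ every positive definite unimodular $\z$-lattice is standard, so $\ok{}$ admits a $\z$-basis orthonormal for $\langle\,,\rangle$. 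In that basis the $\z$-form $\widetilde Q(\mathbf v)=\tr\!\bigl(\delta\,Q(\mathbf v)\bigr)$ on $\ok{m}\cong\z^{mn}$ is nothing but the orthogonal sum $Q^{\perp n}$ of $n$ copies of $Q$. Crucially, as $\mathbf v$ runs over $\ok{m}$ the values $Q(\mathbf v)$ run over exactly the totally positive elements of $\ok{}$ (this is universality), so $Q^{\perp n}$ represents over $\z$ precisely the set $\mathcal T=\{\tr(\delta\alpha):\alpha\in\ok{}\text{ totally positive}\}$. This identity, tying a set that depends only on $(K,\delta)$ to one that depends only on $(Q,n)$, is the engine of the proof.

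The quantitative heart, and the step I expect to be the main obstacle, is to extract from the identity $\operatorname{repr}_{\z}(Q^{\perp n})=\mathcal T$ an upper bound on the discriminant $d_K$. On the arithmetic side, $\mathcal T$ is controlled by the geometry of the totally positive cone against the lattice $\ok{}$: the small values and the gaps of $\mathcal T$ are governed by the conjugate sizes of $\delta$, hence by $N(\delta)=d_K^{-1}$, and large $d_K$ distorts the orthonormal basis so that the cone becomes thin. On the lattice side, $Q^{\perp n}$ is a genuine positive definite $\z$-lattice whose represented set is rigid — it is almost universal once $mn$ is moderately large, and its behaviour on small integers is constrained by reduction theory and, via Cauchy interlacing of the trace form with $Q^{\perp n}$, by bounded successive minima. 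Matching the two descriptions forces $d_K\le B(n)$ for an explicit $B(n)$. Making $B(n)$ sharp enough to close $n=4,5,7$ — whereas degree $6$ apparently resists and is therefore omitted from the statement — is the crux, and is where a careful geometry-of-numbers analysis in the orthonormal basis is required.

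Finally I would invoke Northcott finiteness: for each $n\in\{1,2,3,4,5,7\}$ only finitely many totally real fields have $d_K\le B(n)$ and principal codifferent, and these are tabulated. A field-by-field check should remove every candidate except $\q$, $\q(\sqrt5)$ and $\q(\zeta_7+\zeta_7^{-1})$. For the surviving cubic field one still has to exhibit a universal $\z$-form, so I would show that $x^2+y^2+z^2+w^2+xy+xz+xw$ represents every totally positive element of the ring of integers of $\q(\zeta_7+\zeta_7^{-1})$: either by proving an $\ok{}$-analogue of the Conway--Schneeberger and Bhargava--Hanke criteria, reducing universality to the representation of a finite, explicitly bounded list of totally positive elements and verifying that list, or by pushing the problem through the trace form and invoking the classical $290$-theorem over $\z$.
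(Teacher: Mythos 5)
This statement is Theorem 1.2 of Kala--Yatsyna \cite{KY}; the present paper does not prove it but only records it and sketches the method of \cite{KY} in the paragraph following the remark: Siegel's formula expresses $\zeta_K(-1)$ as a finite sum over totally positive elements of the codifferent of small trace; for $d=2,3,4,5,7$ only trace-one elements occur in that formula; principality of the codifferent plus the $E$-type lemma (the case $\tr(\delta\alpha)=1$ of Lemma \ref{lem:genlem4.6ofKY}) forces every such element to be a unit multiple of the generator, so $\zeta_K(-1)$ is bounded purely in terms of $d$; the functional equation together with $\zeta_K(2)>1$ then bounds the discriminant, and a finite search concludes. Your preparatory steps are correct and do belong to the \cite{KY}/\cite{Ya} circle of ideas: a totally positive generator $\delta$ of $\ok{\vee}$ exists because Lemma \ref{lem:cor4.5ofKY} supplies units of every signature; $(\ok{},\tr(\delta xy))$ is then positive definite, integral and self-dual, hence isometric to $\z^{n}$ for $n\le 7$; and in an orthonormal basis $\tr(\delta\,Q(\cdot))$ is indeed $Q^{\perp n}$, whose set of represented integers is $\mathcal{T}\cup\{0\}$ with $\mathcal{T}=\{\tr(\delta\alpha):\alpha\in\ok{+}\}$.

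The genuine gap is exactly where you say you expect one: the discriminant bound. You assert that ``matching the two descriptions forces $d_K\le B(n)$'' but give no mechanism, and the comparison does not obviously produce one. For $mn\ge 5$ the form $Q^{\perp n}$ represents all sufficiently large integers in the relevant congruence classes, while on the arithmetic side the simplex $\{\alpha\succ 0,\ \tr(\delta\alpha)=t\}$ has volume growing like $t^{\,d-1}\sqrt{d_K}$ relative to the covolume of $\ok{}$, so a large discriminant does not make $\mathcal{T}$ sparse at large $t$; the comparison at small $t$ constrains only finitely many integers and yields no visible bound on $d_K$. In \cite{KY} the bound comes from an entirely different, analytic input (Siegel's formula and the functional equation), which your outline never touches. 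A symptom of the mismatch: your framework treats all $n\le 7$ identically and therefore cannot explain why degree $6$ is excluded from the statement, whereas in the real proof $d=6$ fails precisely because elements of trace $\ge 2$ enter Siegel's formula there. Finally, your last suggestion --- deducing universality of $x^2+y^2+z^2+w^2+xy+xz+xw$ over $\q(\zeta_7+\zeta_7^{-1})$ from the $290$-theorem via the trace transfer --- goes the wrong way: the transfer shows that representability of $\alpha$ over $\ok{}$ implies representability of $\tr(\delta\alpha)$ over $\z$, not the converse, so universality of $Q^{\perp n}$ over $\z$ gives nothing about universality of $Q$ over $\ok{}$.
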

	
	\begin{rmk} It is well-known that a monogenic field (field such that $\ok{} = \z[\rho]$ for some $\rho \in \ok{}$) has principal codifferent ideal. Moreover, if the ideal class group $\textrm{Cl}_K$ of $\ok{}$ is isomorphic to $(\z/2\z)^r$ for some $r\in\n$, then the codifferent ideal is principal, due to Hecke's theorem stating that the different ideal is always square in $\textrm{Cl}_K$ (see \cite{Ar} for the proof). 
		
		On the other hand, there are quite many totally real cubic fields with non-principal codifferent ideal. We downloaded from LMFDB \cite{LMFDB} the list of totally real cubic fields of small class numbers in Magma format, and used Magma to check whether their different ideals are principal or not (hence their codifferent ideals). Let $K$ stands for totally real cubic fields for a moment. Among first $1000$ fields $K$ (ordered by the size of the discriminant) with $\textrm{Cl}_K\cong \z/3\z$ out of $27663$ in the list, $525$ had non-principal codifferent ideals. Out of all the $2073$ or $1150$ fields $K$ in LMFDB with $\textrm{Cl}_K\cong \z/4\z$ or $\z/5\z$ respectively, $545$ or $737$ had non-principal codifferent ideals, respectively.
	\end{rmk}
	
	As described in \cite{KY}, the assumptions of Theorem \ref{thm:KY} on the totally real number fields under consideration come from the tools that they used: Siegel's formula for the value of Dedekind zeta function $\zeta_K(s)$ at $s=-1$ \cite{Sie2,Zag}. This formula expresses the value in terms of elements in the codifferent ideal of small trace. In the case of degree $d=2,3,4,5,$ or $7$, only trace $1$ elements appear in the formula. Moreover, if the codifferent is principal, then computing $\zeta_K(-1)$ boils down to counting the number of trace $1$ elements in the codifferent, which impose a bound in terms of $d$. This bound, together with the functional equation of $\zeta_K(s)$ relating $\zeta_K(-1)$ and $\zeta_K(2)$, provides an upper bound for the discriminant of $K$. This reduces the problem to examining only finitely many $K$ for each degree. 
	
	It is natural to ask whether there exists a $\z$-form universal over $\ok{}$ for a number field $K$ not covered by Theorem \ref{thm:KY}. Indeed, Kala and Yatsyna \cite{KY} also conjectured that there may be no number field $K$ with a universal $\z$-form over $\ok{}$ except for $K=\q, \q(\sqrt{5})$ and $\q(\zeta_7+\zeta_7^{-1})$. Note that $\q(\zeta_7+\zeta_7^{-1})$ is a totally real cubic field, i.e., a number field of degree $d=3$. 
	
	In this paper, we affirmatively answer this conjecture for the totally real cubic fields or the real biquadratic fields by showing the following theorem.
	
	\begin{thm}\label{mainthm}
		There does not exist a $\z$-form that is universal over $\ok{}$ of a totally real cubic number field or a real biquadratic field $K$, unless $K=\q(\zeta_7+\zeta_7^{-1})$.
	\end{thm}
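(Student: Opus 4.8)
The plan is to remove the principality hypothesis from Theorem \ref{thm:KY} for the two families in question, so I first reduce to the genuinely new case. If $K$ has principal codifferent ideal, Theorem \ref{thm:KY} already applies: among totally real cubic fields it leaves only $K=\q(\zeta_7+\zeta_7^{-1})$ (which is monogenic, hence has principal codifferent, so it is legitimately covered), and among degree-$4$ fields it leaves only $\q,\q(\sqrt5),\q(\zeta_7+\zeta_7^{-1})$, none of which is biquadratic. Thus it suffices to prove that \emph{no} totally real cubic field and \emph{no} real biquadratic field with non-principal codifferent ideal admits a universal $\z$-form. Both families consist of fields of degree $d\in\{3,4\}$, and I would treat them uniformly by the same geometry-of-numbers mechanism, using the biquadratic structure only at the final bookkeeping stage.

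The engine, retained from \cite{KY}, is an upper bound coming from the universal form itself. If $Q$ is a universal $\z$-form over $\ok{}$ and $\delta\in\mathfrak d_K^{-1}$ is totally positive with $\tr(\delta)=1$, then $T_\delta(x):=\tr(\delta\,Q(x))$ is a positive definite $\z$-form of rank $d\cdot\rank Q$ that represents every integer $\tr(\delta\alpha)$ with $\alpha\in\ok{}$ totally positive; taking $\alpha=1$ shows each $T_\delta$ represents $1$. Following the trace-form analysis of \cite{KY}, this configuration constrains how many such $\delta$ can coexist and yields a bound $N_K\le B(d)$ on the number $N_K$ of totally positive trace-one elements of $\mathfrak d_K^{-1}$, where $B(d)$ depends only on $d$ (not on $\rank Q$, and not on whether the codifferent is principal). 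I would reprove this step in the form needed here, keeping the dependence on $d$ explicit.

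The new ingredient, and the heart of the argument, is a matching \emph{lower} bound for $N_K$ in terms of the discriminant, valid without any principality assumption; this is where geometry of numbers replaces the Siegel-formula computation of \cite{KY}. The totally positive trace-one elements of $\mathfrak d_K^{-1}$ are exactly the points of the affine lattice $\{x\in\mathfrak d_K^{-1}:\tr x=1\}$ lying in the open simplex $\Delta=\{x\gg0:\tr x=1\}$ under the Minkowski embedding. The underlying linear lattice is the trace-zero sublattice $\Lambda_0\subset\mathfrak d_K^{-1}$ of rank $d-1$, whose covolume in the hyperplane $\{\tr=0\}$ is $\asymp D_K^{-1/2}$, while $\Delta$ has $(d-1)$-volume independent of $D_K$. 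Heuristically this forces $N_K\asymp\sqrt{D_K}$, so I would establish a genuine lower bound $N_K\ge g(D_K)$ with $g$ strictly increasing and explicit. Combined with the engine this gives $g(D_K)\le B(d)$, hence $D_K\le D_0$ for an explicit $D_0=D_0(d)$, leaving only finitely many fields.

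The main obstacle is precisely this lower bound. Counting lattice points of $\Lambda_0$ inside a fixed bounded simplex is robust only when the successive minima of $\Lambda_0$ are controlled: if $\Lambda_0$ is very skew (its last minimum much larger than the diameter of $\Delta$, as can happen for fields of large regulator) the naive volume heuristic can fail, and $\Delta$ could contain far fewer points than $\sqrt{D_K}$. The crux is therefore to control the shape of the trace-zero codifferent lattice, or to argue robustly in the skew case, so that $g(D_K)$ is uniform over all cubic and quartic fields. Granting this, the proof finishes by enumeration: the fields with $D_K\le D_0$ form a finite list (extractable from \cite{LMFDB}), and for each non-principal-codifferent field on it one checks directly, for instance by computing $N_K$ and verifying $N_K>B(d)$, that no universal $\z$-form can exist. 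For the real biquadratic fields one additionally uses that they are Galois with group $(\z/2\z)^2$, so that genus theory restricts which of them can have non-principal codifferent ideal, streamlining the final verification; the conclusion is that the only field in either family admitting a universal $\z$-form is $\q(\zeta_7+\zeta_7^{-1})$.
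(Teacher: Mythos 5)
Your reduction to the non-principal-codifferent case via Theorem \ref{thm:KY} is legitimate, but both halves of the inequality $g(D_K)\le N_K\le B(d)$ that you then need are unproven, and each failure is exactly where the paper had to introduce a new idea. For the upper bound: in \cite{KY} the count of totally positive trace-one elements of $\ok{\vee}$ is controlled by fixing a totally positive generator $\delta_0$ of the codifferent, writing every such element as $\delta_0\alpha$ with $\alpha\in\ok{+}$, $\tr(\delta_0\alpha)=1$, forcing $\alpha$ to be a unit (their Lemma 4.6), hence a square $\beta^2$, and counting minimal vectors of the integral lattice $(\ok{},\tr_{\delta_0})$. The very first step of this chain is where principality enters; without a generator there is no such parametrization, and asserting that the bound holds ``not on whether the codifferent is principal'' and that you would ``reprove this step'' does not remove the obstruction --- the paper's introduction identifies this as precisely why Theorem \ref{thm:KY} carries the principality hypothesis, and the Remark documents that non-principal codifferents are common among cubic fields. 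For the lower bound: as you concede, $N_K\ge g(D_K)$ fails the volume heuristic when the trace-zero part of $\rho(\ok{\vee})$ is skew (large regulator), and even in \cite{KY} the discriminant bound comes from Siegel's formula for $\zeta_K(-1)$, which controls a \emph{weighted} sum over small-trace codifferent elements, not the raw count. With neither bound in place no finite list of fields is produced and the terminal enumeration cannot begin.

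The paper's route is different in kind: it never counts trace-one elements or bounds the discriminant. It refines \cite[Lemma 4.6]{KY} to arbitrary signatures (Lemma \ref{lem:genlem4.6ofKY}): if $\alpha\in\ok{\epsilon}$ and some $\delta\in\ok{\vee,\epsilon}$ of the \emph{same} signature has $\tr(\delta\alpha)=1$, then $\alpha$ is a unit. For cubic $K$ it takes a shortest nonzero vector of the projection of $\rho(\ok{})$ onto the plane orthogonal to $\rho(1)$ and, by elementary plane geometry, builds functionals $\delta_1,\delta_2$ of prescribed signatures equal to $1$ on an entire line of lattice points; this forces all lattice points on two segments of that line to be units, confines the conjugates of a generator to the box \eqref{eqn:rhobetasbound}, and leaves five explicit minimal polynomials, of which only those giving $\q(\zeta_7+\zeta_7^{-1})$ survive. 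For biquadratic $K$ it exhibits $\alpha$ and $\delta$ explicitly from the known integral basis of $\ok{}$ and its dual. You would need to either supply the two missing bounds (which appear to be genuinely hard, and in the lower-bound case possibly false as stated) or switch to a mechanism of this signature-refined, per-element type.
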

	
	We prove this by Theorem \ref{thm:z-formcubicfield} for the real cubic field case and by Theorem \ref{thm:z-formbiquadfield} for the biquadratic field case. Note that Theorem \ref{mainthm} does not require the codifferent ideal of $K$ to be principal, which is a restriction in Theorem \ref{thm:KY} for the number fields under consideration.

	The question whether there exists a universal $\z$-form over a totally real number field of degree $\ge 4$ other than biquadratic fields still remains open. Thanks to Kala and Yatsyna who studied the so-called ``weak lifting problem" and proved some finiteness results in more general setting (see \cite[Theorem 2]{KY2} for the general statement), we know that for $d,r\in\n$, there are only finitely many totally real number fields $K$ of degree $d=[K:\q]$ such that there is a $\z$-form of rank $r$ that is universal over $\mathcal{O}_K$ (see \cite[Corollary 9]{KY2}).
	
	Let us briefly describe the tools we used and summarize our strategy. We approach the problem using geometry of numbers. Let $K$ be a totally real number field of degree $d$ over $\q$, and let $\rho_1=\operatorname{id},\cdots, \rho_d : K \ra\mathbb{R}$ be the real embeddings of $K$. Define a map $\rho : K \ra \mathbb{R}^d$ by
	\[
	\rho(\alpha):= (\rho_1(\alpha), \rho_2(\alpha), \ldots, \rho_d(\alpha))
	\] 
	for any $\alpha \in K$. Then the image $\rho(\ok{})$ of the ring $\ok{}$ is a lattice in $\mathbb{R}^d$. Note that the lattice $\rho(\ok{})$ is identical to the $\z$-module $\ok{}$ equipped with a non-degenerate symmetric bilinear form defined by the trace map. Hence the image $\rho(\ok{\vee})$ of the codifferent $\ok{\vee}$ of $\ok{}$ is indeed the dual lattice of $\rho(\ok{})$ (see Section \ref{subsec:latticeO_K} for details).
	
	We first extend a useful lemma shown in \cite[Lemma 4.6]{KY} for totally positive elements in $\ok{}$ to arbitrary elements in $\ok{}$ in Lemma \ref{lem:genlem4.6ofKY}. This can be interpreted geometrically such that if $K$ admits a universal $\z$-form, then any $\alpha\in\ok{}$ that satisfies the following geometric condition should be a unit: there is a hyperplane in $\mathbb{R}^d$ passing through $\rho(\alpha)$ and orthogonal to $\rho(\delta)$ for some $\delta\in\ok{\vee}$, such that all points $\beta \in \ok{}$ with the same signature as $\alpha$ are on the same side (see Section \ref{subsec:notations} for the definition of the signature map). However, geometry of $\ok{}$ shows that, except for finitely many cubic fields $K$, there is an $\alpha$ that is not a unit but satisfies the geometric condition described above (i.e. such $\delta$ exists), which proves Theorem \ref{mainthm}.
	
	For the cubic case, we focus on the fact that elements of the dual lattice are in one-to-one correspondence with linear functionals defined on the original lattice. We try to find $\delta\in\ok{\vee}$ that satisfies the above condition by defining a special linear functional on $\ok{}$ (see Lemma \ref{lem:bij-codiff-linfunctional}). This enables us to show, in Lemma \ref{lem:pointsonlineareunits}, the existence of a line $\ell$ in $\mathbb{R}^3$ that is parallel to $\rho(1)$ and passes through a point $\rho(\beta)$ in $\rho(\ok{})$, such that every point of $\rho(\ok{})$ lying on the segment formed by the intersection of $\ell$ and one of the octants of $\mathbb{R}^3$ must be a unit. This gives certain bounds for $\rho_1(\beta),\rho_2(\beta),\rho_3(\beta)$ which allow us to limit the potential minimal polynomials of $\beta$ to only a few possibilities. Finally, among such finite candidates of $K = \q(\beta)$, we show $\q(\zeta_7+\zeta_7^{-1})$ is the only field posing no contradictions to Lemma \ref{lem:pointsonlineareunits}.
	
	For the biquadratic case, we can explicitly write an integral basis of $\ok{}$ and hence that of $\ok{\vee}$. Using this information, we can explicitly exhibit a nonunit $\alpha\in\ok{}$ and $\delta\in\ok{\vee}$ satisfying the conditions of Lemma \ref{lem:genlem4.6ofKY}. It will immediately follow that $K$ does not admit a universal $\z$-form.

	The paper is organized as follows. In Section \ref{sec:prelim}, we set up the notation, recall the theory of quadratic lattices, and introduce some useful lemmas. In Section \ref{sec:quadratic}, we provide a geometric proof for the real quadratic field case, giving motivation for our geometric idea. In Section \ref{sec:cubic}, we prove Theorem \ref{thm:z-formcubicfield}, which proves Theorem \ref{mainthm} for the real cubic field case. Finally, in Section \ref{sec:biquadratic}, we provide the proof for the real biquadratic field case in Theorem \ref{thm:z-formbiquadfield}, completing the proof of Theorem \ref{mainthm}.

	\section{Preliminaries}\label{sec:prelim}
	
	\subsection{Notations and terminologies}\label{subsec:notations}
	Throughout the paper, $K$ will denote a totally real number field of degree $d=[K:\q]$ over $\q$.
	Let $\rho_1=\operatorname{id},\cdots, \rho_d : K \ra \mathbb{R}$ be the real embeddings of $K$, and define a map $\rho : K \ra \mathbb{R}^d$ by
	\begin{equation}\label{def:rho}
		\rho(\alpha):= (\rho_1(\alpha), \rho_2(\alpha), \ldots, \rho_d(\alpha))
	\end{equation}
	for any $\alpha \in K$. The norm of $\alpha\in K$ is $N(\alpha):=\rho_1(\alpha)\cdots \rho_d(\alpha)$ and its trace is $\tr(\alpha):=\rho_1(\alpha)+\cdots \rho_d(\alpha)$. Let $\operatorname{Sgn}:K^\times \ra \{-1,1\}^d$ be the signature map, defined by $\operatorname{Sgn}(\alpha):=(\e_1,\ldots,\e_d)$, where $\e_i$ is the sign of $\rho_i(\alpha)$ for any $\alpha \in K^\times$. The signature map is a homomorphism, and the elements of the kernel of the signature map are called totally positive numbers. Denote totally positive elements of $\ok{}$ by
	\[
	\ok{+}:=\{\alpha \in\oo_K : \rho_i(\alpha)>0 \text{ for all }i\},
	\]
	and denote the codifferent of $\ok{}$ by $\ok{\vee}:=\{\beta\in K : \operatorname{Tr}(\beta\ok{})\subseteq \z\}$. For $\epsilon=(\e_1,\ldots,\e_d)\in \{-1,1\}^d$, we further denote that
	\[
	\ok{\epsilon}:=\{\alpha \in\ok{} : \operatorname{Sgn}(\alpha)=\epsilon\}, \quad \ok{\vee,\epsilon} := \{\delta \in \ok{\vee}: \operatorname{Sgn}(\delta)=\epsilon\}.
	\]
	Note that $\ok{+}=\ok{(1,\ldots,1)}$ by their definitions, and we simply write $\ok{\vee,+}=\ok{\vee,(1,\ldots,1)}$.
	
	We will work with a positive definite quadratic form with integer coefficients, that is, a homogeneous polynomial $Q\in\z[x_1,\ldots,x_r]$ of degree two of the form
	\[
	Q(x_1,x_2,\ldots,x_r)=\sum_{1\le i\le j\le r} a_{ij} x_ix_j \text{ with } a_{ij}\in\z
	\]
	that satisfies $Q(x)> 0$ for all $x\in\mathbb{R}^r\setminus\{0\}$. We will simply call such quadratic form as $\z$-form. One may also consider a quadratic form over $\ok{}$, that is a quadratic form $Q(x)=\sum_{i\le j} a_{ij}x_ix_j$ with $a_{ij}\in\ok{}$. Such a form $Q$ is said to be {\em totally positive} if $Q(v)\in\ok{+}$ for all $v\in \ok{r}\setminus\{0\}$. One may consider a $\z$-form $Q$ as a totally positive quadratic form over $\ok{}$. We say a $\z$-form $Q$ {\em represents} $\alpha\in\ok{+}$ if $Q(v)=\alpha$ for some $v\in\ok{r}$, and say a $\z$-form $Q$ is {\em universal} over $\ok{}$ if it represents any $\alpha\in\ok{+}$.

	Now we introduce the geometric language of quadratic spaces and lattices. A {\em quadratic space} over $\q$ is a vector space $V$ over $\q$ equipped with a non-degenerate symmetric bilinear form $B:V\times V \ra \q$. We sometimes denote the quadratic space as a pair $(V,B)$. We say $V$ is positive definite if the associated quadratic form $Q(v)=B(v,v)$ is positive definite, that is, $Q(v)>0$ for all $v\in V\setminus\{0\}$. A {\em quadratic $\z$-lattice} $L$ on $V$ is a finitely generated $\z$-module such that $\q L=V$. Since the ring $\z$ of rational integers is a principal ideal domain, every $\z$-lattice is a free $\z$-module. Write $L=\z x_1 + \cdots + \z x_r$ for a basis $x_1,\ldots ,x_r$ of $V$. The {\em dual} of $L$ is defined by
	\[
	L^{\#}:=\left\{ x\in V : B(x,L)\subseteq \z \right\}.
	\]
	and the {\em dual basis} $y_1, \ldots, y_r$ of a basis $x_1, \ldots, x_r$ is defined as the list of elements $y_j \in V$ satisfying $B(x_i, y_j) = \delta_{ij}$ for $1 \le i,j \le r$. It is well known that $L^{\#}$ is also a lattice on $V$ and any dual basis becomes its $\z$-basis. We say $L$ is {\em integral} if $B(L,L)\subseteq \z$. 
	
	Any unexplained notation and terminology on quadratic lattices can be found in \cite{OM2}.

	\subsection{Lattice structure on $\ok{}$ and $\rho(\ok{})$}\label{subsec:latticeO_K}
	Let us consider a map $B_{\tr}:K \times K \rightarrow \q$ defined by 
	\[
	B_{\tr}(\alpha,\beta):=\tr(\alpha\beta) \quad \text{ for any }\alpha, \beta \in K.
	\] 
	One may easily show that $B_{\tr}$ is a non-degenerate symmetric bilinear form defined on $K$, hence we may consider $K$ as a quadratic space over $\q$. Let $\alpha_1,\ldots,\alpha_d$ be an integral basis of $\ok{}$, that is, $\ok{}=\z\alpha_1 + \cdots +\z\alpha_d$. Since $\tr(\ok{})\subseteq \z$, the ring of integers $\ok{}$ is a integral $\z$-lattice on $K$. Moreover, it follows that  $\ok{\#}=\ok{\vee}$, from the definitions of the codifferent ideal and the dual of a lattice.
	
	Recall the map $\rho:K\ra \mathbb{R}^d$ defined in \eqref{def:rho} and let $\cdot$ denote the standard inner product on $\mathbb{R}^d$. Noting for any $\alpha,\beta\in K$ that 
	\[
	\tr(\alpha\beta)=\sum_{i=1}^d \rho_i(\alpha\beta)=\sum_{i=1}^d \rho_i(\alpha)\rho_i(\beta)=\rho(\alpha)\cdot \rho(\beta),
	\]
	we may identify the $\z$-lattice $\ok{}$ on $(K,B_{\tr})$ with the $\z$-lattice $\rho(\ok{})$ in the Euclidean space $(\mathbb{R}^d,\cdot )$. Thus one may understand the $\z$-lattice $(\ok{},B_{\tr})$ geometrically by looking it as a lattice $\rho(\ok{})$ in the Euclidean space.
	
	Let $H$ be the hyperplane in $\mathbb{R}^d$ orthogonal to $\rho(1)=(1,\ldots, 1)$, and let $p:\mathbb{R}^d \ra H$ be the orthogonal projection onto $H$. Note that the image $\Lambda:=p(\rho(\ok{}))$ is a lattice on $H$ since $\rho(1)$ is a primitive vector of $\rho(\ok{})$. The following lemma describes how linear functionals defined on $\ok{}$ or those on $\Lambda$ are linked with the elements of $\ok{\vee}$.
	
	\begin{lem}\label{lem:bij-codiff-linfunctional}
		For $\delta\in\ok{\vee}$, define the linear functional $f_\delta : \ok{} \ra \z$ by $f_\delta(\alpha):=\tr(\delta\alpha)$ for any $\alpha\in \ok{}$. Then, the map 
		\[
		\ok{\vee} \ra \{\text{linear functionals } f:\ok{}\ra \z \},\quad \delta \mapsto f_\delta
		\]
		is a bijection. Moreover, if $g:\Lambda\ra \z$ is a linear functional on $\Lambda:=p(\rho(\ok{}))$, then the linear functional $f(g):=g\circ p \circ\rho:\ok{}\ra\z$ corresponds to $\delta\in \ok{\vee}$ such that $\tr(\delta)=0$.
	\end{lem}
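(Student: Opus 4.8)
The plan is to recognize the first claim as the standard identification of the dual lattice $\ok{\#}=\ok{\vee}$ with the $\z$-module $\operatorname{Hom}_\z(\ok{},\z)$ of integral linear functionals, and to deduce the second claim from the single observation that $\rho(1)$ spans the orthogonal complement of $H$. First I would check that $\delta\mapsto f_\delta$ is a well-defined homomorphism of abelian groups: well-definedness is immediate from the definition of the codifferent, since $\delta\in\ok{\vee}$ gives $\tr(\delta\ok{})\subseteq\z$, so each $f_\delta(\alpha)=\tr(\delta\alpha)$ lies in $\z$, and $f_\delta$ is $\z$-linear because $\tr$ and multiplication are; additivity in $\delta$ is clear. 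For injectivity, suppose $f_\delta\equiv 0$; then $\tr(\delta\alpha)=0$ for all $\alpha\in\ok{}$, hence for all $\alpha\in K$ since $\ok{}$ spans $K$ over $\q$, and non-degeneracy of $B_{\tr}$ forces $\delta=0$.

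For surjectivity I would use a dual basis. Fix an integral basis $\alpha_1,\dots,\alpha_d$ of $\ok{}$ and let $\gamma_1,\dots,\gamma_d\in K$ be its dual basis with respect to $B_{\tr}$, so that $\tr(\gamma_i\alpha_j)=\delta_{ij}$; as recalled in Section \ref{subsec:latticeO_K}, the $\gamma_i$ then form a $\z$-basis of $\ok{\#}=\ok{\vee}$. Given an arbitrary linear functional $f:\ok{}\ra\z$, set $n_i:=f(\alpha_i)\in\z$ and $\delta:=\sum_{i=1}^d n_i\gamma_i\in\ok{\vee}$. Then $f_\delta(\alpha_j)=\sum_i n_i\tr(\gamma_i\alpha_j)=n_j=f(\alpha_j)$ for each $j$, so $f_\delta=f$ by linearity, and the map $\delta\mapsto f_\delta$ is a bijection.

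For the second claim, note first that $f(g)=g\circ p\circ\rho$ is genuinely a linear functional $\ok{}\ra\z$: the isomorphism $\rho:\ok{}\ra\rho(\ok{})$ and the restriction $p|_{\rho(\ok{})}:\rho(\ok{})\ra\Lambda$ are $\z$-linear, and $g:\Lambda\ra\z$ is a linear functional, so the composite is $\z$-valued and $\z$-linear. By the bijection just established there is a unique $\delta\in\ok{\vee}$ with $f_\delta=f(g)$, and I claim $\tr(\delta)=0$. The key point is that $\rho(1)=(1,\dots,1)$ spans $H^{\perp}$, so $p(\rho(1))=0$ and hence $f(g)(1)=g(p(\rho(1)))=g(0)=0$. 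On the other hand $f_\delta(1)=\tr(\delta\cdot 1)=\tr(\delta)$. Comparing these two evaluations at $1$ yields $\tr(\delta)=0$, as required.

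I do not expect any serious obstacle here, as the lemma is essentially a bookkeeping statement. The only step requiring genuine input is surjectivity, where one must invoke that the $B_{\tr}$-dual basis of an integral basis really is a $\z$-basis of the codifferent (equivalently $\ok{\#}=\ok{\vee}$); this is already recorded in Section \ref{subsec:latticeO_K}. Everything else rests on the non-degeneracy of the trace form and the trivial fact that the projection $p$ annihilates $\rho(1)$.
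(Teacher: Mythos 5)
Your proposal is correct and follows the same route as the paper: the bijection is the standard identification of $\ok{\#}=\ok{\vee}$ with $\operatorname{Hom}_\z(\ok{},\z)$ (which the paper simply cites, while you spell out injectivity via non-degeneracy of $B_{\tr}$ and surjectivity via the dual basis), and the trace-zero claim is obtained exactly as in the paper by evaluating at $1$ and using $p(\rho(1))=0$.
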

	\begin{proof}
		The first bijection follows from the fact that $\ok{\#}=\ok{\vee}$. If $\delta\in\ok{\vee}$ satisfies $f(g)=f_\delta$, then 
		\[
		\tr(\delta)=\tr(\delta\cdot 1)=f_\delta(1)=f(g)(1)=g(0)=0.
		\]
		This implies the lemma.
	\end{proof}

	\subsection{Some lemmas} We introduce some useful lemmas giving necessary conditions for $K$ which admits a $\z$-form universal over $\ok{}$. The first lemma is due to \cite[Corollary 4.5]{KY}.
	
	\begin{lem}[{\cite[Corollary 4.5]{KY}}]\label{lem:cor4.5ofKY}
		Let $K$ be a totally real number field of degree $d\le 43$. Assume that there is a $\z$-form that is universal over $\ok{}$. Then every totally positive unit is a square in $\ok{}$. Hence there is a unit of every signature in $\ok{}$.
	\end{lem}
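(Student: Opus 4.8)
The plan is to prove the first assertion — every totally positive unit is a square in $\ok{}$ — and then deduce the second formally. For the deduction, recall that since $K$ is totally real, Dirichlet's unit theorem gives $\ok{\times}\cong\{\pm1\}\times\z^{d-1}$, so $\ok{\times}/(\ok{\times})^2$ is an $\f_2$-vector space of dimension $d$. Every square in $\ok{}$ is totally positive, so the signature map factors through a linear map $\ok{\times}/(\ok{\times})^2\to\{\pm1\}^d$ of $\f_2$-spaces of equal dimension $d$, whose kernel is exactly the totally positive units modulo squares. Thus ``every totally positive unit is a square'' asserts that this map is injective, hence (equal finite dimensions) bijective, which is precisely the existence of a unit of every signature. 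So it suffices to treat the first assertion.

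For the substantive claim I would pass to the trace lattice. Set $\widehat Q(v):=\tr(Q(v))$ for $v\in\ok{r}$; since $Q$ has coefficients in $\z$, one has $\rho_j(Q(v))=Q(\rho_j(v))>0$ for every embedding $\rho_j$ and every $v\neq0$, so $Q(v)\in\ok{+}$ and $\widehat Q$ is a positive definite, integer-valued quadratic form on $\ok{r}\cong\z^{rd}$. By the arithmetic--geometric mean inequality, every $\alpha\in\ok{+}$ satisfies $\tr(\alpha)\ge d\,|N(\alpha)|^{1/d}\ge d$, with equality if and only if $\alpha=1$; hence $\min\widehat Q=d$, attained exactly at the vectors representing $1$. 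Moreover, universality of $Q$ means that for every $\alpha\in\ok{+}$ there is $v$ with $Q(v)=\alpha$, so $\widehat Q$ represents $\tr(\alpha)$; in particular it represents $\tr(u)$ for every totally positive unit $u$.

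The engine should then be the $290$-theorem (in its integer-valued form) applied to $\widehat Q$. One first argues that $\widehat Q$ represents every integer in the relevant finite range — which requires producing totally positive elements of the needed small traces — so that $\widehat Q$ represents all integers $\ge d$; the hypothesis $d\le43$ is what keeps this finite set of critical traces inside the range controlled by the theorem. I would then combine this quantitative universality with the rigidity of the minimal vectors (the representations of $1$) to force that any representation of a totally positive unit descends to a representation as a square, giving $u\in(\ok{\times})^2$. The main obstacle is exactly this last extraction: a purely local square-class analysis cannot succeed, since a universal form represents every totally positive element regardless of its square class, so any obstruction to a non-square unit must be global and quantitative. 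Converting the representation data furnished by the $290$-theorem into the algebraic statement that $u$ is a square, and simultaneously pinning down the precise provenance of the bound $d\le43$, is where the real work lies.
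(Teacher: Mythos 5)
Your reduction of the second sentence to the first is correct: since $K$ is totally real, $\ok{\times}/(\ok{\times})^2\cong(\z/2\z)^d$ by Dirichlet, and the signature map on this quotient is injective exactly when totally positive units are squares, hence bijective by dimension count. That part is fine and is the standard argument. But for the substantive first assertion your plan does not close, and you say so yourself; the gap is not a technicality but the entire mechanism. Note first that the paper does not prove this lemma at all --- it is quoted verbatim from Kala--Yatsyna \cite[Corollary 4.5]{KY} --- but the paper does explain precisely where $d\le 43$ comes from, and it is not the $290$-theorem: it is Kitaoka's theorem that every positive definite $\z$-lattice of rank at most $43$ is of $E$-type.

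The $290$-theorem is the wrong engine here. Even granting that $\widehat Q(v)=\tr(Q(v))$ represents every integer $\ge d$, this says nothing about square classes of units: knowing that \emph{some} $w$ satisfies $\widehat Q(w)=\tr(u)$ does not connect $w$ to $u$, and knowing that $Q(v)=u$ for some $v$ gives only $\widehat Q(v)=\tr(u)$, which for a generic unit is far from the minimum $d$, so no rigidity of minimal vectors applies. The actual argument twists the trace form: for a totally positive unit $u$ one chooses $\delta\in\ok{\vee,+}$ (depending on $u$) so that the vector $v$ with $Q(v)=u$ becomes a \emph{minimal} vector of the tensor product lattice $L\otimes(\ok{},B_\delta)$, where $L$ is the $\z$-lattice of $Q$ and $B_\delta(x,y)=\tr(\delta xy)$. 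Because $(\ok{},B_\delta)$ has rank $d\le 43$ it is of $E$-type, so the minimum of the tensor product is the product of the minima and every minimal vector splits as a pure tensor $x\otimes\alpha$ with $x\in L$, $\alpha\in\ok{}$. This yields $u=Q(x)\alpha^2$ with $Q(x)\in\z_{>0}$, and taking norms gives $1=N(u)=Q(x)^dN(\alpha)^2$, forcing $Q(x)=1$ and $u=\alpha^2$. The splitting of minimal vectors of tensor products --- not quantitative universality of $\widehat Q$ --- is the missing idea, and it is exactly what the hypothesis $d\le 43$ buys.
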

	
	We should remark that the condition $d\le 43$ on the degree of $K$ comes from the tool they used; the concept of lattices of $E$-type, which was first introduced by Kitaoka \cite{Kit1}. Consider an integral basis $\alpha_1,\ldots,\alpha_d$ of $\ok{}$. For $\delta\in\ok{\vee,+}$, define the twisted trace form $\tr_\delta: K \ra \q$ by $\tr_\delta(x):=\tr\left(\delta x^2\right)$ for any $x\in K$. Then $\tr_\delta(\ok{})\subseteq \z_{\ge0}$ and furthermore $(\ok{},B_\delta)$ becomes a positive definite integral $\z$-lattice of rank $d$, where $B_\delta:K\times K\ra\q$ is defined by $B_\delta(x,y):=\frac{1}{2}\left(\tr_\delta(x+y)-\tr_\delta(x)-\tr_\delta(y)\right)=\tr(\delta xy)$ for any $x,y\in K$. The exact condition that is required for $K$ in order for Lemma \ref{lem:cor4.5ofKY} to be satisfied is that $(\ok{},B_\delta)$ is a lattice of $E$-type for any $\delta\in \ok{\vee,+}$. This is true if $d\le43$ since every $\z$-lattice of rank at most $43$ is of $E$-type by \cite[Theorem 7.1.1]{Kit2}. We refer the readers to Chapter 7 of \cite{Kit2} for  and Section 4 of \cite{KY} for more details on lattice of $E$-type.
	
	Now we describe an extended version of \cite[Lemma 4.6]{KY}, which will play a crucial role together with the geometric interpretation of the $\z$-lattice $\ok{}$ in proving our main results.
	\begin{lem}\label{lem:genlem4.6ofKY}
		Let $K$ be a totally real number field of degree $d\le 43$ and let $\epsilon\in\{1,-1\}^d$. Assume that there is a $\z$-form that is universal over $\ok{}$ and let $\alpha\in \ok{\epsilon}$. If there is a $\delta\in \ok{\vee,\epsilon}$ such that $\tr(\delta\alpha)\le \tr(\delta\beta)$ for any $\beta\in \ok{\epsilon}$, then $\alpha$ is a unit in $\ok{}$. In particular, if $\tr(\delta\alpha)=1$ for some $\delta\in \ok{\vee,\epsilon}$, then $\alpha$ is a unit in $\ok{}$.
	\end{lem}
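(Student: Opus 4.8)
The plan is to reduce the statement to the totally positive case $\epsilon=(1,\dots,1)$, which is exactly \cite[Lemma 4.6]{KY}, by twisting $\alpha$ and $\delta$ with a unit of signature $\epsilon$. The whole content of the extension is the change of signature, and the tool that makes it possible is Lemma \ref{lem:cor4.5ofKY}.

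First I would dispatch the ``in particular'' clause by exhibiting it as a special case of the minimality hypothesis. Indeed, for every $\beta\in\ok{\epsilon}$ the product $\delta\beta$ lies in $\ok{\vee}$ (since $\ok{\vee}$ is an $\ok{}$-module) and has signature $\epsilon\cdot\epsilon=(1,\dots,1)$, so it is totally positive; hence $\tr(\delta\beta)$ is a positive integer and satisfies $\tr(\delta\beta)\ge 1$. Thus if $\tr(\delta\alpha)=1$, then automatically $\tr(\delta\alpha)\le\tr(\delta\beta)$ for all $\beta\in\ok{\epsilon}$, so $\alpha$ meets the hypothesis of the main assertion and it remains only to prove the latter.

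For the main assertion, the first step is to invoke Lemma \ref{lem:cor4.5ofKY}: since $d\le 43$ and $K$ admits a universal $\z$-form, there is a unit $u\in\ok{}$ with $\operatorname{Sgn}(u)=\epsilon$. I would then set $\alpha':=u^{-1}\alpha$ and $\delta':=u\delta$. Because $u$ is a unit and $\ok{\vee}$ is an $\ok{}$-module, both $\alpha'\in\ok{}$ and $\delta'\in\ok{\vee}$, and a short signature computation (using that every element of $\{-1,1\}^d$ is its own inverse) gives $\operatorname{Sgn}(\alpha')=\operatorname{Sgn}(\delta')=(1,\dots,1)$, so that $\alpha'\in\ok{+}$ and $\delta'\in\ok{\vee,+}$. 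Next I would observe that $\beta\mapsto u^{-1}\beta$ is a bijection $\ok{\epsilon}\to\ok{+}$ (with inverse $\gamma\mapsto u\gamma$), and that for $\gamma=u^{-1}\beta$ the commutativity of $K$ yields $\tr(\delta'\gamma)=\tr(u\delta u^{-1}\beta)=\tr(\delta\beta)$, while $\tr(\delta'\alpha')=\tr(\delta\alpha)$. Consequently the minimality hypothesis transports verbatim to $\tr(\delta'\alpha')\le\tr(\delta'\gamma)$ for all $\gamma\in\ok{+}$, so the totally positive case \cite[Lemma 4.6]{KY} applies to $\alpha'$ and $\delta'$ and shows $\alpha'$ is a unit; therefore $\alpha=u\alpha'$ is a unit as well.

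The step I expect to be the crux is the appeal to Lemma \ref{lem:cor4.5ofKY} to produce a unit of the prescribed signature $\epsilon$. This is where the two standing hypotheses are genuinely used — universality together with the bound $d\le 43$, the latter guaranteeing that the twisted trace lattices $(\ok{},B_\delta)$ are of $E$-type — and it is precisely the ingredient that upgrades the totally positive statement to an arbitrary signature. Should one prefer a self-contained argument, the alternative is to rerun the $E$-type representation argument of \cite{KY} directly for the lattice $(\ok{},B_{\delta'})$ rather than citing it; but the unit-twisting reduction is shorter and has the advantage of isolating the only new input, namely the signature change.
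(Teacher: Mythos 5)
Your proof is correct and follows essentially the same route as the paper: both dispatch the ``in particular'' clause by noting $\tr(\delta\beta)$ is a positive integer, and both reduce the general signature $\epsilon$ to the totally positive case of \cite[Lemma 4.6]{KY} by twisting with a unit $u$ of signature $\epsilon$ supplied by Lemma \ref{lem:cor4.5ofKY}. The only cosmetic difference is that the paper sets $\alpha'=u\alpha$, $\delta'=\delta u^{-1}$ where you set $\alpha'=u^{-1}\alpha$, $\delta'=u\delta$, which is the same computation.
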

	\begin{proof}
		The case when $\epsilon=(1,\ldots,1)$ is just \cite[Lemma 4.6]{KY} itself. To prove the lemma for general $\epsilon\in\{1,-1\}^d$, take a unit $u$ in $\ok{\epsilon}$ whose existence is ensured by Lemma \ref{lem:cor4.5ofKY}. Then $\delta':=\delta u^{-1}\in \ok{\vee,+}$ and $\alpha':=u\alpha \in \ok{+}=\ok{(1,\ldots,1)}$ satisfies the assumption of the lemma for the case when $\epsilon=(1,\ldots,1)$; for any $\beta'\in\ok{+}$, noting that $u^{-1}\beta\in\ok{\epsilon}$, we have $\tr(\delta'\alpha')=\tr(\delta\alpha)\le \tr(\delta\beta)=\tr(\delta'\beta')$. Therefore, $\alpha'=u\alpha$ is a unit, hence so is $\alpha$.  
		
		To prove the ``in particular" part, let $\delta\in \ok{\vee,\epsilon}$ and $\alpha\in \ok{\epsilon}$ be such that $\tr(\delta\alpha)=1$. Noting that $\delta\beta\in\ok{+}$ for any $\beta  \in\ok{\epsilon}$, we have $\tr(\delta\beta)\in\z_{>0}$. Hence $\tr(\delta\alpha)=1\le \tr(\delta\beta)$ for any $\beta \in \ok{\epsilon}$, so $\alpha$ is a unit.
	\end{proof}
	
	\section{When $d=2$: real quadratic number field case}\label{sec:quadratic}
	
	In this section, we provide another proof of \cite[Theorem 1.1]{KY} to give a motivation for the proof in the cubic field case. The proof mainly uses Lemma \ref{lem:genlem4.6ofKY} together with a geometric interpretation of the $\z$-lattice $\rho(\ok{})$ in $2$-dimensional Euclidean space.
	\begin{thm}[Theorem 1.1 of \cite{KY}]\label{thm:z-formquadfield}
		There does not exist a $\z$-form that is universal over a real quadratic number field $K$, unless $K=\q(\sqrt{5})$.
	\end{thm}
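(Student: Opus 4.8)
The plan is to reduce everything to a single numerical constraint on the fundamental unit $\eta>1$ of $K$, by feeding a well-chosen codifferent element into Lemma \ref{lem:genlem4.6ofKY}. First I would dispose of the case $N(\eta)=+1$: then every unit $\pm\eta^{n}$ has norm $N(\eta)^{n}=1$, so no unit has signature $(1,-1)$ (such a unit would have negative norm), and Lemma \ref{lem:cor4.5ofKY} immediately rules out a universal $\z$-form. Hence I may assume $N(\eta)=-1$, so that $\eta^{2}=1+\tr(\eta)\eta$ and the totally positive units are exactly the even powers $\eta^{2j}$, $j\in\z$.

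The geometric heart is the choice of functional. I would produce a nonzero totally positive $\delta\in\ok{\vee}$ with $\tr(\delta\eta)=0$: the condition $\tr(x\eta)=0$ cuts out a one-dimensional $\q$-subspace of $K$, which meets the full lattice $\ok{\vee}$ in a rank-one subgroup; since $\tr(\delta\eta)=\rho(\delta)\cdot\rho(\eta)$ and $\rho(\eta)=(\eta,-\eta^{-1})$, the vector $\rho(\delta)$ is forced to be a multiple of the first-quadrant vector $(\eta^{-1},\eta)$, so after replacing $\delta$ by $-\delta$ if needed it is totally positive. Set $c:=\tr(\delta)\in\z_{>0}$. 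The relation $\tr(\delta\eta)=0$ gives $\tr\big(\delta(1+k\eta)\big)=c$ for every $k$, so the points $1,\,1+\eta,\,\dots,\,\eta^{2}=1+\tr(\eta)\eta$ all lie on the single line $\{\tr(\delta\,\cdot\,)=c\}$ parallel to $\rho(\eta)$. A short computation of $\tr(\delta\eta^{2j})$ shows that among the totally positive units the value $\tr(\delta\,\cdot\,)$ is minimized precisely at $1$ and $\eta^{2}$, both giving $c$.

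Now comes the decisive dichotomy. Since $\tr(\delta\beta)\in\z_{>0}$ for every $\beta\in\ok{+}$, the minimum $m_{0}:=\min_{\beta\in\ok{+}}\tr(\delta\beta)$ is attained; by Lemma \ref{lem:genlem4.6ofKY} any minimizer is a unit, hence a totally positive unit $\eta^{2j}$, hence has value $\ge c$, while $1\in\ok{+}$ already achieves $c$, so $m_{0}=c$. Consequently $1+\eta\in\ok{+}$ also achieves $m_{0}=c$ and must be a unit by Lemma \ref{lem:genlem4.6ofKY}. But $N(1+k\eta)=1+k\tr(\eta)-k^{2}$, so $N(1+\eta)=\tr(\eta)$, which exceeds $1$ as soon as $\tr(\eta)\ge 2$ (and $\tr(\eta)=\eta-\eta^{-1}\ge 1$ since $\eta>1$). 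This contradiction forces $\tr(\eta)=1$, whence $\eta=(1+\sqrt5)/2$ and $K=\q(\sqrt5)$.

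The step I expect to be most delicate is establishing $m_{0}=c$, i.e. that no stray lattice point of $\ok{+}$ undercuts the value $c$ on the segment from $\rho(1)$ to $\rho(\eta^{2})$. The clean way to avoid a convex-hull case analysis is exactly the dichotomy above: rather than proving geometrically that this segment lies on the lower boundary of $\operatorname{conv}\rho(\ok{+})$, I let Lemma \ref{lem:genlem4.6ofKY} force the guaranteed minimizer to be a totally positive unit and then use the unit computation to pin its value at $c$. I should also note the reason for taking $\delta$ merely nonzero (with $c=\tr(\delta)$ an arbitrary positive integer) rather than normalizing $\tr(\delta)=1$ as in the functional picture of Lemma \ref{lem:bij-codiff-linfunctional}: the normalization would require $\{1,\eta\}$ to be a $\z$-basis of $\ok{}$, which can fail (for instance when $\ok{}\supsetneq\z[\eta]$), whereas the rank-one intersection keeps $\delta$ inside $\ok{\vee}$ unconditionally; the remaining computations of $N(1+k\eta)$ and $\tr(\delta\eta^{2j})$ are routine.
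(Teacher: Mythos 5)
Your proof is correct, but it takes a genuinely different route from the paper's. The paper picks the explicit basis $\ok{}=\z 1+\z\tau$, defines the functional $f(1)=0$, $f(\tau)=1$, obtains via Lemma \ref{lem:bij-codiff-linfunctional} an element $\delta_f\in\ok{\vee,(1,-1)}$ with $\tr(\delta_f\,\alpha(a))=1$ for every $\alpha(a)=a+\tau$, and then applies only the ``in particular'' clause of Lemma \ref{lem:genlem4.6ofKY} in the mixed-signature cone: since the quadratic $N(a+\tau)$ takes the value $-1$ at most twice but $\alpha(-1),\alpha(0),\alpha(1)$ all lie in $\ok{(1,-1)}$ when $D\neq 5$, a contradiction follows. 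You instead work intrinsically with the fundamental unit $\eta>1$: you dispose of $N(\eta)=+1$ directly by Lemma \ref{lem:cor4.5ofKY}, take a totally positive $\delta\in\ok{\vee}$ orthogonal to $\rho(\eta)$ (correctly extracted as a rank-one intersection, which avoids any normalization such as $\tr(\delta)=1$ or the assumption $\ok{}=\z[\eta]$), and invoke the full minimizer clause of Lemma \ref{lem:genlem4.6ofKY} in the totally positive cone to force $m_0=c$ and hence that $1+\eta$ is a unit, pinning down $\tr(\eta)=1$. Your dichotomy establishing $m_0=c$ is sound: every value $\tr(\delta\beta)$ is a positive integer, the attained minimizer must be a totally positive unit $\eta^{2j}$, and $\tr(\delta\eta^{2j})=t(\eta^{2j-1}+\eta^{1-2j})$ is minimized exactly at $j=0,1$ with common value $c$. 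What the paper's argument buys is uniformity (no case split on $N(\eta)$, no appeal to the structure of the unit group beyond what Lemma \ref{lem:genlem4.6ofKY} already encodes) and a template that generalizes to the cubic case via lines parallel to $\rho(1)$; what yours buys is basis-freeness (no case analysis on $D\bmod 4$), use of only the totally positive instance of Lemma \ref{lem:genlem4.6ofKY} (i.e.\ the original \cite[Lemma 4.6]{KY} rather than its signature-twisted extension), and a conclusion that directly identifies the fundamental unit as $(1+\sqrt{5})/2$.
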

	
	\subsection{An algebraic proof} Let $K=\q(\sqrt{D})$ be a real quadratic number field, where $D\ge 2$ is a squarefree integer, and let $\ok{}=\z 1 + \z \tau$, where $\tau = \sqrt{D}$ if $D\equiv 2,3 \Mod{4}$ and $\tau=\frac{1+\sqrt{D}}{2}$ if $D\equiv 1\Mod{4}$. Consider the linear functional $f:\ok{} \ra \z$ defined by $f(1)=0$ and $f(\tau)=1$.
	By Lemma \ref{lem:bij-codiff-linfunctional}, there is $\delta_f\in \ok{\vee}$ such that $\tr(\delta_f)=\tr(\delta_f 1)=f(1)=0$ and $\tr(\delta_f \tau)=f(\tau)=1$. Since $\rho_1(\tau)=\tau>0$ and $\rho_2(\tau)<0$, one may observe that $\rho_1(\delta_f)=-\rho_2(\delta_f)>0$. Hence we have $\delta_f \in \ok{\vee,(1,-1)}$.
	
	On the other hand, let $\alpha(a):=a+\tau\in\ok{}$ with $a\in\z$. Note that $\tr(\delta_f \alpha(a)) = f(\alpha(a))=f(a\cdot 1+\tau) = 1$. Thus, if $\alpha(a) \in \ok{(1,-1)}$, then $\alpha$ is a unit by Lemma \ref{lem:genlem4.6ofKY}, and hence $N(\alpha(a))=-1$. Noting that 
	\[
	N(\alpha(a))=N(a+\tau)=\begin{cases}
		a^2-D & \text{if } D\equiv 2,3\Mod{4},\\
		\left(a+\frac{1}{2}\right)^2-\frac{D}{4} & \text{if } D\equiv 1\Mod{4},
	\end{cases}
	\]
	there are at most two $a\in\z$ satisfying $N(\alpha(a))=-1$. Thus, there should be at most two $\alpha(a)$ with $\alpha(a)\in\ok{(1,-1)}$. One may easily check that $\alpha(-1),\alpha(0),\alpha(1)\in\ok{(1,-1)}$ if $D\neq 5$. This proves the statement. Indeed, in the original proof in \cite{KY}, it is used that $N(\alpha(0))=-1$ implies $D=5$ to conclude the proof.
	
	\subsection{Geometric interpretation} Now, let us identify $\ok{}$ with the $\z$-lattice $\rho(\ok{})\subseteq\mathbb{R}^2$, and interpret the above argument geometrically in $\mathbb{R}^2$. For $x\in \mathbb{R}^2$, let $\ell_x$ denote the line in $\mathbb{R}^2$ parallel to $\rho(1)=(1,1)$ passing through $x$. Then the linear functional $f:\ok{}\ra \z$ defined above corresponds to the linear functional  $f\circ\rho^{-1}:\rho(\ok{}) \ra \z$, and it can be characterized as one satisfying $f\circ\rho^{-1}(1,1)=0$, $f\circ\rho^{-1}(x_1,x_2)=1$ for all $\alpha\in\ok{}$ with $\rho(\alpha) \in \ell_{\rho(\tau)}$. Since $\rho(\ok{})^{\#}=\rho(\ok{\vee})$, the element $\delta_f\in\ok{\vee}$ defined above satisfies $\rho(\delta_f)\cdot (1,1)=0$, and $\rho(\delta_f)\cdot \rho(\alpha)=1$ for all $\alpha\in \ok{}$ with $\rho(\alpha)\in\ell_{\rho(\tau)}$.
	Then Lemma \ref{lem:genlem4.6ofKY} implies that if $\alpha\in\ok{}$ satisfies $\rho(\alpha)\in \ell_{\rho(\tau)}\cap \{(x_1,x_2):x_1>0,\ x_2<0\}$, then $\rho_1(\alpha)\rho_2(\alpha)=-1$. Note that 
	\[
	|\ell_{\rho(\tau)}\cap \{(x_1,x_2)\in\mathbb{R}^2: x_1>0,\ x_2<0,\ x_1x_2=-1\}|\le 2,
	\]
	and on the line $\ell_{\rho(\tau)}$, points of $\rho(\ok{})$ are placed every $||\rho(1)||=\sqrt{2}$ distance, where $||\cdot ||$ denotes the Euclidean norm. Hence the length of the segment $\ell_{\rho(\tau)}^{(1,-1)}:=\ell_{\rho(\tau)}\cap \{(x_1,x_2)\in\mathbb{R}^2: x_1\ge0,\ x_2\le0\}$ should be less than $3\sqrt{2}$, otherwise it contains at least three points of $\rho(\ok{})$. Since the length of $\ell_{\rho(\tau)}$ is given as $\sqrt{2} (\rho_1(\tau) - \rho_2(\tau))$, we have $\rho_1(\tau)-\rho_2(\tau) < 3$ and this is possible only when $D=2$ or $D=5$. However, when $D=2$, the following three points of $\rho(\ok{})$
	\[
	\rho(\alpha(-1))=(-1+\sqrt{2},-1-\sqrt{2}),\ \rho(\alpha(0))=(\sqrt{2},-\sqrt{2}),\ \rho(\alpha(1))=(1+\sqrt{2},1-\sqrt{2})
	\]
	lie on the segment $\ell_{\rho(\tau)}^{(1,-1)}$, which is a contradiction.
	
	\section{When $d=3$: real cubic number field case}\label{sec:cubic}
	
	In this section, we prove the following theorem which completely answers the Lifting problem for totally real cubic number field case. Throughout this section, $K$ always stands for a totally real number field of degree $[K:\q]=3$.
	\begin{thm}\label{thm:z-formcubicfield}
		There does not exist a $\z$-form that is universal over a totally real cubic number field $K$, unless $K=\q(\zeta_7+\zeta_7^{-1})$.
	\end{thm}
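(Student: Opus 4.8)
The plan is to leverage the geometric machinery developed in Lemma \ref{lem:genlem4.6ofKY} and Lemma \ref{lem:bij-codiff-linfunctional}, following the strategy outlined in the introduction for the cubic case. Recall that by Lemma \ref{lem:genlem4.6ofKY}, if $K$ admits a universal $\z$-form, then any $\alpha\in\ok{\epsilon}$ for which there exists $\delta\in\ok{\vee,\epsilon}$ with $\tr(\delta\alpha)\le\tr(\delta\beta)$ for all $\beta\in\ok{\epsilon}$ must be a unit. The key geometric reformulation is as follows: pick a signature $\epsilon$ and consider the line $\ell$ in $\mathbb{R}^3$ parallel to $\rho(1)=(1,1,1)$ passing through $\rho(\beta)$ for some $\beta\in\ok{}$. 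Any $\delta\in\ok{\vee}$ with $\tr(\delta)=0$ defines (via Lemma \ref{lem:bij-codiff-linfunctional}) a linear functional that is constant along $\ell$, since $\rho(\delta)\cdot\rho(1)=\tr(\delta)=0$. If we can choose $\delta$ of the correct signature $\epsilon$ so that $\tr(\delta\cdot)$ is minimized over $\ok{\epsilon}$ precisely on the points of $\rho(\ok{})$ lying on $\ell$ inside the octant corresponding to $\epsilon$, then Lemma \ref{lem:genlem4.6ofKY} forces every such lattice point to be a unit. This is the content of the promised Lemma \ref{lem:pointsonlineareunits}.

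The main quantitative step is then a counting/length argument, directly generalizing the $d=2$ case in Section \ref{sec:quadratic}. On the line $\ell$, consecutive points of $\rho(\ok{})$ are spaced at distance $\|\rho(1)\|=\sqrt{3}$. The intersection of $\ell$ with the chosen octant is a segment whose endpoints occur where one of the coordinates $\rho_i$ changes sign; its length is controlled by the spread of the values $\rho_1(\beta),\rho_2(\beta),\rho_3(\beta)$. Since all lattice points on this segment must be units, and units have $N(\alpha)=\pm1$, I would extract explicit inequalities bounding $\rho_1(\beta)-\rho_2(\beta)$, $\rho_2(\beta)-\rho_3(\beta)$, etc.\ (after ordering the conjugates). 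These bounds, combined with the fact that $\beta$ is an algebraic integer of degree $3$ with conjugates $\rho_i(\beta)$, constrain the elementary symmetric functions $\tr(\beta)$, $\tr(\beta^2)$ (or equivalently the coefficients of the minimal polynomial) to lie in a bounded region. This is precisely how one limits the candidate minimal polynomials of $\beta$ to finitely many possibilities, and hence the candidate fields $K=\q(\beta)$ to a finite list.

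The delicate part — and what I expect to be the main obstacle — is the passage from ``finitely many candidate polynomials" to ``$\q(\zeta_7+\zeta_7^{-1})$ is the only survivor." Two issues must be handled with care. First, one must verify that the chosen $\beta$ genuinely realizes the minimizing configuration: the functional $f_\delta$ must attain its minimum over all of $\ok{\epsilon}$ exactly along $\ell$, which requires a careful choice of the linear functional on $\Lambda=p(\rho(\ok{}))$ and control of the projected lattice geometry, so that no lattice point off the line achieves a smaller trace. Second, for each surviving candidate field, one must either exhibit a non-unit $\alpha\in\ok{\epsilon}$ sitting on the relevant segment (yielding a contradiction with Lemma \ref{lem:genlem4.6ofKY}, hence ruling out a universal $\z$-form) or verify that $K=\q(\zeta_7+\zeta_7^{-1})$ survives. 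The norm conditions $N(\alpha)=\pm1$ translate into the statement that the integer lattice points on the segment correspond to rational-integer solutions of a norm-form equation, and the crux is showing that, for all candidates except $\q(\zeta_7+\zeta_7^{-1})$, the segment is long enough to contain a lattice point that fails to be a unit.

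I would organize the argument by first fixing an ordering $\rho_1(\beta)>\rho_2(\beta)>\rho_3(\beta)$ of the conjugates (handling the totally real case so the $\rho_i(\beta)$ are distinct reals), then deriving the length bound on the segment to obtain explicit numerical inequalities, and finally running the finite check. A subtle point worth flagging is that, unlike Theorem \ref{thm:KY}, this approach does \emph{not} assume the codifferent ideal of $K$ is principal; the functional-theoretic correspondence in Lemma \ref{lem:bij-codiff-linfunctional} produces the needed $\delta\in\ok{\vee}$ regardless of whether $\ok{\vee}$ is principal, which is exactly what lets us cover the many cubic fields with non-principal codifferent excluded from Theorem \ref{thm:KY}.
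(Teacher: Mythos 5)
Your overall strategy matches the paper's: reduce to showing that every point of $\ok{}$ lying on the bounded part of a line parallel to $\rho(1)$ inside a fixed octant is a unit, use this to bound the conjugates of a suitable $\beta$, and finish with a finite check on candidate minimal polynomials. However, there is a genuine gap at the central step, which you defer to ``the promised Lemma \ref{lem:pointsonlineareunits}'' without supplying an argument: you never specify \emph{which} $\beta$ to take, nor how to produce a $\delta\in\ok{\vee,\epsilon}$ of the \emph{prescribed signature} whose associated functional takes the value $1$ on the line. This is the heart of the proof. The paper's choice is to take $\beta$ with $p(\rho(\beta))=u$ a \emph{shortest nonzero vector} of the projected lattice $\Lambda=p(\rho(\ok{}))$ in the plane $H$ orthogonal to $\rho(1)$. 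The shortest-vector property forces the line through a complementary basis vector $v$ parallel to $u$ to lie at distance at least $\frac{\sqrt{3}}{2}\lVert u\rVert$ from the origin; since the six open cones in $H$ cut out by the planes $y_i=y_j$ each have opening angle $\frac{\pi}{3}$, that line meets each of the two cones adjacent to the one containing $u$ in a segment of length greater than $\lVert u\rVert$, hence containing a lattice point $v_1$. The dual functional of the basis $\{u,v_1\}$ determined by $g(u)=1$, $g(v_1)=0$ then corresponds under Lemma \ref{lem:bij-codiff-linfunctional} to a $\delta_1$ with $\rho(\delta_1)$ in the cone obtained by rotating that adjacent cone by $\frac{\pi}{2}$, which pins down $\operatorname{Sgn}(\delta_1)$ as $(1,1,-1)$ (and similarly $(1,-1,-1)$ for $\delta_2$). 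Without this, or an equivalent construction, the existence of the required $\delta$ is unproved and the rest of the argument does not get off the ground.

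A second, smaller issue: you ask for $\delta$ such that $\tr(\delta\,\cdot)$ is \emph{minimized over all of} $\ok{\epsilon}$ exactly on the line, and you flag ensuring that ``no lattice point off the line achieves a smaller trace'' as a difficulty. This is more than is needed and would be genuinely hard to verify. The ``in particular'' clause of Lemma \ref{lem:genlem4.6ofKY} already does the work: once $\delta\in\ok{\vee,\epsilon}$ and $\alpha\in\ok{\epsilon}$ satisfy $\tr(\delta\alpha)=1$, minimality is automatic, because $\tr(\delta\beta)$ is a positive integer for every $\beta\in\ok{\epsilon}$. Normalizing the functional to take the value $1$ on the line (achieved above by setting $g(u)=1$) removes the global-minimization problem entirely. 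The remaining quantitative steps you outline --- spacing $\sqrt{3}$ between lattice points along the line, the resulting bounds on the $\rho_i(\beta)$, the reduction to finitely many cubics, and exhibiting non-units such as $\beta-2$ on the segment for the surviving fields other than $\q(\zeta_7+\zeta_7^{-1})$ --- are consistent with the paper's computation, but they all rest on the unproved lemma above.
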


	For $x=(x_1,x_2,x_3)\in \mathbb{R}^3$, let $\ell_x$ denote the line in $\mathbb{R}^3$ parallel to $\rho(1)=(1,1,1)$ passing through $x$. Note that the three planes $x_i=0$ for $i=1,2,3$ split $\ell_x$ into at most four pieces, at least two of which are rays. Precisely, for indices $i,j,k$ with $\{i,j,k\}=\{1,2,3\}$ satisfying $x_k\le x_j\le x_i$, define
	\[
	\begin{aligned}
		\ell_{x,+}&:=\{ (y_1,y_2,y_3) : y_i=t+x_i,\ y_j=t+x_j,\ y_k=t+x_k,\ t>-x_k\}\\
		\ell_{x,1}&:=\{ (y_1,y_2,y_3) : y_i=t+x_i,\ y_j=t+x_j,\ y_k=t+x_k,\ -x_j<t<-x_k\}\\
		\ell_{x,2}&:=\{ (y_1,y_2,y_3) : y_i=t+x_i,\ y_j=t+x_j,\ y_k=t+x_k,\ -x_i<t<-x_j\}\\
		\ell_{x,-}&:=\{ (y_1,y_2,y_3) : y_i=t+x_i,\ y_j=t+x_j,\ y_k=t+x_k,\ t<-x_i\}
	\end{aligned}
	\]
	If $x_1, x_2, x_3$ are all distinct, then $\ell_{x,1}$ and $\ell_{x,2}$ become finite segments, and $\ell_{x,+}$ and $\ell_{x,-}$ become rays. Define $\well_x = \ell_{x,1} \cup \ell_{x,2}$.
	
	\begin{lem}\label{lem:pointsonlineareunits}
		There exists $\beta\in\ok{}\setminus\z$ whose corresponding line $\ell_{\rho(\beta)}$ satisfies the following property: if $\alpha\in\ok{}$ satisfies $\rho(\alpha)\in\well_{\rho(\beta)}$, then $\alpha$ is a unit.
	\end{lem}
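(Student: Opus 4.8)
\section*{Proof proposal}

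The plan is to transport the problem to the plane $H$ and to the dual lattice $\Lambda^{\#}$ of $\Lambda=p(\rho(\ok{}))$. The guiding observation is that a codifferent element $\delta$ of trace zero makes $\tr(\delta\,\cdot\,)$ constant along every line parallel to $\rho(1)$: if $\tr(\delta)=0$, then $\rho(\delta)\in H$ and
\[
\tr(\delta\alpha)=\rho(\delta)\cdot\rho(\alpha)=\rho(\delta)\cdot p(\rho(\alpha))\qquad(\alpha\in\ok{}),
\]
which depends only on $p(\rho(\alpha))$. By Lemma \ref{lem:bij-codiff-linfunctional} these $\delta$ are exactly those with $\rho(\delta)\in\Lambda^{\#}$. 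Moreover, if the coordinates of $\beta$ are ordered $\rho_i(\beta)>\rho_j(\beta)>\rho_k(\beta)$, then a direct check of the definitions shows that $\ell_{\rho(\beta),1}$ carries the signature $\epsilon_1$ that is positive in coordinates $i,j$ and negative in $k$, while $\ell_{\rho(\beta),2}$ carries the signature $\epsilon_2$ that is positive only in $i$; these are an adjacent pair of mixed signatures.

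This reduces the lemma to producing $\beta\in\ok{}\setminus\z$ together with $\delta_1\in\ok{\vee,\epsilon_1}$ and $\delta_2\in\ok{\vee,\epsilon_2}$ satisfying $\tr(\delta_1)=\tr(\delta_2)=0$ and $\tr(\delta_1\beta)=\tr(\delta_2\beta)=1$. Granting this, any $\alpha\in\ok{}$ with $\rho(\alpha)\in\ell_{\rho(\beta),1}$ has $\operatorname{Sgn}(\alpha)=\epsilon_1$, and the constancy above gives $\tr(\delta_1\alpha)=\tr(\delta_1\beta)=1$; the ``in particular'' clause of Lemma \ref{lem:genlem4.6ofKY} then forces $\alpha$ to be a unit, and the same argument with $\delta_2$ covers $\ell_{\rho(\beta),2}$. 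Hence every point of $\rho(\ok{})$ on $\well_{\rho(\beta)}=\ell_{\rho(\beta),1}\cup\ell_{\rho(\beta),2}$ is a unit.

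To build these data I would work in $H\cong\mathbb{R}^2$, where the three lines $\{\rho_m=0\}$ cut $H$ into six $60^{\circ}$ sectors whose signatures alternate between the ``two positive'' and ``one positive'' types, with $\epsilon_1,\epsilon_2$ meeting along $\{\rho_j=0\}$. Since the labelling of the real embeddings is ours to choose, I may pick $\beta\in\ok{}\setminus\z$ with $\lambda_0:=p(\rho(\beta))$ primitive in $\Lambda$ and then name the embeddings so that $\rho_1(\beta)>\rho_2(\beta)>\rho_3(\beta)$ (the three conjugates are automatically distinct, as $\beta\notin\z$ generates the cubic field $K$); by construction $\beta\notin\z$ and its ordering is consistent with $\epsilon_1,\epsilon_2$, so no separate consistency check is needed. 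The equations $\tr(\delta_m\beta)=1$ now read $\rho(\delta_m)\cdot\lambda_0=1$, so I seek $\Lambda^{\#}$-points on the affine line $L=\{\mu\in H:\mu\cdot\lambda_0=1\}$ lying in the sectors $\epsilon_1$ and $\epsilon_2$. Because $\lambda_0$ is primitive, $\Lambda^{\#}\cap L$ is a nonempty arithmetic progression of spacing $\|\lambda_0\|/\operatorname{covol}(\Lambda)$, and since the foot of $L$ lies in $\epsilon_1\cup\epsilon_2$ one verifies that $L$ genuinely meets both sectors.

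The main obstacle is this last step: ensuring the progression actually lands inside both sub-segments $L\cap\epsilon_1$ and $L\cap\epsilon_2$ rather than stepping over one of them. This is a genuine two-dimensional geometry-of-numbers estimate, since the sub-segment lengths scale like $1/\|\lambda_0\|$ and must be weighed against the spacing $\|\lambda_0\|/\operatorname{covol}(\Lambda)$. I expect the resolution to rest on a judicious choice of $\lambda_0$—for instance a shortest vector of $\Lambda$—together with Hermite-type inequalities bounding $\operatorname{covol}(\Lambda)$ by the successive minima, and on the freedom to range over all primitive $\lambda_0$ in the fixed chamber. Precisely these inequalities should translate into the bounds on $\rho_1(\beta),\rho_2(\beta),\rho_3(\beta)$ exploited in the remainder of the argument.
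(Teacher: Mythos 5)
Your reduction is sound and in fact coincides with the paper's: the lemma is equivalent to producing $\beta\in\ok{}\setminus\z$ together with trace-zero $\delta_1,\delta_2\in\ok{\vee}$ of the two relevant mixed signatures $(1,1,-1)$ and $(1,-1,-1)$ (after ordering the embeddings) with $\tr(\delta_m\beta)=1$, and your identification of such $\delta$ with points of the planar dual lattice $\Lambda^{\#}$ lying on the affine line $L=\{\mu\in H:\mu\cdot\lambda_0=1\}$ is correct. But the proposal stops exactly where the content of the lemma begins. Showing that the arithmetic progression $\Lambda^{\#}\cap L$, of spacing $\lVert\lambda_0\rVert/\operatorname{covol}(\Lambda)$, lands in \emph{both} sub-segments $L\cap\epsilon_1$ and $L\cap\epsilon_2$, each of length of order $1/\lVert\lambda_0\rVert$, is a quantitative statement in which the constants are tight; merely checking that $L$ meets both $60^{\circ}$ sectors does not prevent the progression from stepping over one of them. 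You explicitly leave this as an expectation (``I expect the resolution to rest on\dots''), so the existence of $\delta_1$ and $\delta_2$ --- which is the whole point of the lemma --- is not established. This is a genuine gap.

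For comparison, the paper closes it on the primal side, where the constants become transparent. Take $u=\lambda_0$ to be a \emph{shortest} nonzero vector of $\Lambda$ and extend to a basis $\{u,v\}$. The line $\ell$ through $v$ parallel to $u$ carries the lattice points $v+ku$ at spacing $\lVert u\rVert$, and minimality of $u$ forces $\ell$ to lie at distance $d_\ell\ge\tfrac{\sqrt3}{2}\lVert u\rVert$ from the origin (otherwise the disk of radius $\lVert u\rVert$ cuts out a segment of $\ell$ longer than $\lVert u\rVert$, which must contain some $v+k_0u$ shorter than $u$). A cotangent computation then gives $|\ell\cap C_1|>\tfrac{2}{\sqrt3}d_\ell\ge\lVert u\rVert$ for the $60^{\circ}$ cone $C_1=\{y_1>y_3>y_2\}$, so $\ell\cap C_1$ contains a basis vector $v_1=v+k_1u$. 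The dual basis vector $\delta_1$ of $\{u,v_1\}$ (defined by $g_1(u)=1$, $g_1(v_1)=0$) then lies on your line $L$ by construction, and since $\rho(\delta_1)$ is orthogonal to $v_1$ with $\rho(\delta_1)\cdot u>0$, it sits in the cone obtained from $C_1$ by a quarter turn, which is precisely the signature sector $\{y_1>0,\ y_2>0,\ y_3<0\}$; replacing $C_1$ by $C_2$ yields $\delta_2$. Note also that taking $\lambda_0$ shortest is not merely ``judicious'': for a general primitive $\lambda_0$ the spacing $\lVert\lambda_0\rVert/\operatorname{covol}(\Lambda)$ can dwarf the sub-segment lengths, so the freedom to range over all primitive vectors that you invoke would not by itself rescue the argument --- the Hermite bound for the shortest vector is what makes the two scales comparable.
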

	
	Before we prove Lemma \ref{lem:pointsonlineareunits}, we first describe the proof of Theorem \ref{thm:z-formcubicfield} using the lemma.
	
	\begin{proof}[Proof of Theorem \ref{thm:z-formcubicfield}]
		Let us take $\beta\in\ok{}\setminus\z$ satisfying the property in Lemma \ref{lem:pointsonlineareunits}. We first note that $\beta \notin \z$ implies $\rho_1(\beta), \rho_2(\beta), \rho_3(\beta)$ are all distinct and none of them are integers. We observe that for $i\neq j$, the set $\{ x \in K : \rho_i(x) = \rho_j(x) \}$ is a proper subfield of cubic extension $K$, so it should be $\q$. Thus if $\rho_i(\beta)=\rho_j(\beta)$, then $\beta \in \q\cap \ok{}=\z$, which is a contradiction.
		
		Let $i,j,k$ be indices with $\{i,j,k\}=\{1,2,3\}$ such that $\rho_k(\beta)< \rho_j(\beta) < \rho_i(\beta)$. By shifting $\beta$ by an integer if necessary, we may assume that $-1 < \rho_k(\beta)<0$. If $\rho_j(\beta)>2$, then we have
		\[
		\rho_j(\beta-1)>1, \ \rho_i(\beta-1) > \rho_j(\beta-1) > 1 \text{ and } \rho_k(\beta-1)<-1.
		\]
		So we have $\rho(\beta-1) \in \ell_{\rho(\beta),1}$ but $|N(\beta-1)|=|\rho_i(\beta-1)\rho_j(\beta-1)\rho_k(\beta-1)|>1$, thus it contradicts Lemma \ref{lem:pointsonlineareunits}. Therefore we should have $\rho_j(\beta)<2$.	Similarly if $\rho_i(\beta)>4$ then
		\[
		\rho_i(\beta-3)>1, \ \rho_j(\beta-3)<-1 \text{ and } \rho_k(\beta-3)<-3,
		\] 
		so $\rho(\beta-3) \in \ell_{\rho(\beta),2}$ but $|N(\beta-3)|>3$ contradicts Lemma \ref{lem:pointsonlineareunits}. Thus $\rho_i(\beta)<4$. Meanwhile, if $\rho_i(\beta)<1$ then $-1 < \rho_k(\beta) < \rho_j(\beta)<\rho_i(\beta)<1$, so $|N(\beta)|<1$ contradicts that $\beta \in \ok{}$. Thus $\rho_i(\beta)>1$. Collecting all those bounds gives
		\begin{equation}\label{eqn:rhobetasbound}
			-1 < \rho_k(\beta)<0, \quad -1 < \rho_j(\beta)<2, \quad 1 < \rho_i(\beta)<4.
		\end{equation}
		Write the minimal polynomial of $\beta$ as
		\[
		m_{\beta}(x) = (x- \rho_i(\beta)) (x - \rho_j(\beta)) (x- \rho_k(\beta)) = x^3 - c_1(\beta) x^2 + c_2 (\beta) x - c_3(\beta).
		\]
		As $\beta \in \ok{}$, $m_{\beta}(s)$ is an integral polynomial. From $c_1(\beta) = \rho_i(\beta)+\rho_j(\beta)+\rho_k(\beta)$, we can obtain $-1 < c_1(\beta)<6$ hence $0 \le c_1(\beta) \le 5$ since $c_1(\beta)\in\z$. As $\rho_i(\beta)>0$ and $\rho_k(\beta)<0$, $\rho(\beta) \in \well_{\rho(\beta)}$ so $\beta$ is a unit and therefore $|c_3(\beta)| = |N(\beta)|=1$ by Lemma \ref{lem:pointsonlineareunits}. Observing that $\rho_i(\beta-1)>0$ and $\rho_k(\beta-1)<-1$, we also have $\rho(\beta-1) \in \well_{\rho(\beta)}$ so $|N(\beta-1)| = |m_{\beta}(1)| = 1$ since $\beta-1$ should be a unit by Lemma \ref{lem:pointsonlineareunits}. Now we consider the following $24$ candidates for $m_\beta(s)$ whose coefficients satisfy
		\[
		0 \le c_1(\beta) \le 5, \quad c_3(\beta) \in \{1, -1\}, \quad m_{\beta}(1) = 1 - c_1(\beta) + c_2(\beta) - c_3(\beta)  \in \{1,-1\}
		\]
		Among these $24$ cubics, one may check that those satisfying the conditions \eqref{eqn:rhobetasbound} as well are given as
		\[
		x^3 - 2x^2 - x +1, \quad x^3 - 3x^2 + 1, \quad x^3 - 4x^2 + x + 1, \quad x^3 - 4x^2 + 3x+1, \quad x^3 - 5x^2 + 4x + 1.
		\]
		One may check that the second, the third and the fifth cubics cannot be happened as they contradict Lemma \ref{lem:pointsonlineareunits}, while the others give possible real cubic field $K=\q(\beta)$ by noting that $\q(\zeta_7+\zeta_7^{-1})$ is Galois over $\q$ and the following:
		\begin{itemize}
			\item $x^3 - 2x^2 - x +1$: it has $\zeta_7+\zeta_7^{-1}+1$ as a root, so $K=\q(\beta)=\q(\zeta_7+\zeta_7^{-1})$.
			
			\item $x^3 - 3x^2 + 1$: we have $\rho_i(\beta) \approx 2.879$, so $\beta-2 \in \well_{\rho(\beta)}$. But $N(\beta-2) = -m_{\beta}(2)=3$ so $\beta-2$ is not a unit.
			
			\item $x^3 - 4x^2 + x + 1$: we have $\rho_i(\beta) \approx 3.651$, so $\beta-2 \in \well_{\rho(\beta)}$. But $N(\beta-2) = -m_{\beta}(2)=5$ so $\beta-2$ is not a unit.
			
			\item $x^3 - 4x^2 + 3x+1$: it has $-(\zeta_7+\zeta_7^{-1})+1$ as a root, so $K=\q(\beta)=\q(\zeta_7+\zeta_7^{-1})$.
			
			\item $x^3 - 5x^2 + 4x + 1$: we have $\rho_i(\beta) \approx 3.912$, so $\beta-2 \in \well_{\rho(\beta)}$. But $N(\beta-2) = -m_{\beta}(2) = 3$ so $\beta-2$ is not a unit.
		\end{itemize}
		This completes the proof of Theorem \ref{thm:z-formcubicfield}.
	\end{proof}
	
	To complete the proof of Theorem \ref{thm:z-formcubicfield}, it suffices to prove Lemma \ref{lem:pointsonlineareunits}. Recalling the notations defined in Section \ref{subsec:latticeO_K}, let $H=\{(y_1,y_2,y_3)\in \mathbb{R}^3 | y_1+y_2+y_3=0\}$ be the hyperplane in $\mathbb{R}^3$ orthogonal to $\rho(1)=(1,1,1)$, $p:\mathbb{R}^3\ra H$ the orthogonal projection map onto $H$, and let $\Lambda=p(\rho(\ok{}))$. Let $u=(u_1,u_2,u_3)\in\Lambda\setminus\{0\}$ be a shortest non-zero vector of $\Lambda$, and let $\beta\in\ok{}$ be such that $p(\rho(\beta))=u$. If $\beta\in\z$, then $u=p(\rho(\beta))=0$ which is a contradiction. Hence $\beta\notin \z$. We will show that this $\beta$ satisfies the condition in the lemma. Note that $\ell_{\rho(\beta)}=\ell_u$, and we may show that $u_1, u_2, u_3$ are all distinct using the same argument in the first paragraph of the proof of Theorem \ref{thm:z-formcubicfield}.
	
	We only prove the case when $u_1>u_2>u_3$ since the proofs are similar in other cases. Note that $\ell_{u,1}$ and $\ell_{u,2}$ are contained in the octant $\{(y_1,y_2,y_3)\in\mathbb{R}^3: y_1>0,\ y_2>0,\ y_3<0\}$ and $\{(y_1,y_2,y_3)\in\mathbb{R}^3: y_1>0,\ y_2<0,\ y_3<0\}$ respectively.
	
	\begin{claim}
		There exists $\delta_1 \in \ok{\vee,(1,1,-1)}$ and $\delta_2 \in \ok{\vee,(1,-1,-1)}$ satisfying the following:
		\begin{equation}\label{eqn:conddelta_1}
			\tr(\delta_1\alpha)=1 \quad   \text{for all } \alpha\in \ok{} \text{ with } \rho(\alpha)\in\ell_u,
		\end{equation}
		\begin{equation}\label{eqn:conddelta_2}
			\tr(\delta_2\alpha)=1 \quad   \text{for all } \alpha\in \ok{} \text{ with } \rho(\alpha)\in\ell_u.
		\end{equation}
	\end{claim}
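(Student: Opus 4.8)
The plan is to recast the Claim as a statement about lattice points of the planar dual lattice $\Lambda^\#$, and then produce those points using the minimality of $u$. First I would record that the lattice points lying on $\ell_u$ are exactly $\{\beta+n:n\in\z\}$, since $\rho(\ok{})\cap\mathbb{R}(1,1,1)=\rho(\z)$. Hence for $\delta\in\ok{\vee}$ the requirement $\tr(\delta\alpha)=1$ for all such $\alpha$ is equivalent to $\tr(\delta)=0$ together with $\tr(\delta\beta)=1$. By Lemma \ref{lem:bij-codiff-linfunctional}, the elements $\delta\in\ok{\vee}$ with $\tr(\delta)=0$ are precisely those with $\rho(\delta)\in H$, and under the identification of integer functionals on $\Lambda$ with vectors of $H$ via the inner product these $\rho(\delta)$ range over $\Lambda^\#\subset H$. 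Writing $w=\rho(\delta)$, the condition $\tr(\delta\beta)=1$ reads $w\cdot u=1$, because $\rho(\beta)-u\in\mathbb{R}(1,1,1)\perp H$. Finally, since the signature of $\delta$ is the sign pattern of $w=(\rho_1(\delta),\rho_2(\delta),\rho_3(\delta))$, the signature $(1,1,-1)$ corresponds to the open sector $\{w\in H:w_1>0,\ w_2>0\}$ and $(1,-1,-1)$ to $\{w\in H:w_2<0,\ w_3<0\}$. Thus the Claim is equivalent to producing points of $\Lambda^\#$ on the affine line $\mathcal{L}:=\{w\in H:w\cdot u=1\}$ lying in these two sectors.

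Next I would analyze the geometry of $\mathcal{L}$. I parametrize it by $w=u/||u||^2+s\,e$, where $e$ is the unit vector of $H$ proportional to $(u_2-u_3,u_3-u_1,u_1-u_2)=u\times(1,1,1)$, hence perpendicular to $u$ inside $H$. Computing the parameters $s_1,s_2,s_3$ at which $w_1,w_2,w_3$ vanish and simplifying with $u_1+u_2+u_3=0$, the numerators collapse to $||u||^2$, giving the clean lengths
\[
s_2-s_1=\frac{\sqrt3\,||u||}{(u_1-u_3)(u_2-u_3)},\qquad s_3-s_2=\frac{\sqrt3\,||u||}{(u_1-u_2)(u_1-u_3)},
\]
where I also use $(u_1-u_2)^2+(u_2-u_3)^2+(u_1-u_3)^2=3||u||^2$. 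Tracking the monotonicity of $w_1,w_2,w_3$ in $s$ one finds $s_1<s_2<s_3$, with $w$ in the sector $(1,1,-1)$ precisely for $s\in(s_1,s_2)$ and in $(1,-1,-1)$ precisely for $s\in(s_2,s_3)$. So it remains to place a point of $\Lambda^\#$ in each of these open intervals.

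The main obstacle, and the point where minimality of $u$ enters, is guaranteeing such a point in each possibly short interval. The points of $\Lambda^\#\cap\mathcal{L}$ form an arithmetic progression with common difference $v^*$, where $\{u,v\}$ is a basis of $\Lambda$ and $\{u^*,v^*\}$ the dual basis; here $v^*$ is parallel to $e$ and $||v^*||=||u||/\operatorname{covol}(\Lambda)$. Since $u$ is a shortest nonzero vector of the planar lattice $\Lambda$, Hermite's inequality in dimension two yields $\operatorname{covol}(\Lambda)\ge\frac{\sqrt3}{2}||u||^2$. Combined with the elementary identities
\[
\tfrac32||u||^2-(u_1-u_2)(u_1-u_3)=(u_2-u_3)^2>0,\qquad \tfrac32||u||^2-(u_1-u_3)(u_2-u_3)=(u_1-u_2)^2>0,
\]
which hold because $u_1+u_2+u_3=0$, this gives exactly $s_2-s_1>||v^*||$ and $s_3-s_2>||v^*||$. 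An open interval strictly longer than the spacing of an arithmetic progression always contains a term of it, so each of $(s_1,s_2)$ and $(s_2,s_3)$ contains a point $w\in\Lambda^\#$, necessarily with all coordinates nonzero since it is interior to the sector.

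To finish, I would take $\delta_1,\delta_2\in\ok{\vee}$ with $\rho(\delta_1),\rho(\delta_2)$ equal to the two points just found; then $\tr(\delta_i)=0$, $\tr(\delta_i\beta)=1$, and $\operatorname{Sgn}(\delta_1)=(1,1,-1)$, $\operatorname{Sgn}(\delta_2)=(1,-1,-1)$, whence \eqref{eqn:conddelta_1} and \eqref{eqn:conddelta_2} hold for every $\alpha\in\ok{}$ with $\rho(\alpha)\in\ell_u$. I expect the only genuinely routine steps to be the verification of the two length formulas and the bookkeeping of which open interval realizes which signature; the conceptual weight is carried by the Hermite bound matching the two quadratic identities exactly, so the shortest-vector hypothesis on $u$ is used in an essential way.
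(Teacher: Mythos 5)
Your proof is correct; it is the dual--lattice formulation of the paper's argument, and the two are essentially mirror images of each other. The paper finds a point $v_1=v+k_1u$ of $\Lambda$ lying in the cone $C_1$ (resp.\ $C_2$) on the line through $v$ parallel to $u$, defines $\delta_1$ via the functional $g_1(u)=1$, $g_1(v_1)=0$, and checks that $\rho(\delta_1)$ lies in the rotated cone $D_1=\{w\in H: w_1>0,\ w_2>0\}$, which is exactly your signature sector; your dual point $u^*+kv^*$ on the affine line $w\cdot u=1$ is the same object, since its kernel on $\Lambda$ is generated by $v-ku$. The quantitative inputs also coincide: the paper's bound $d_\ell\ge\frac{\sqrt{3}}{2}\lVert u\rVert$, proved directly from the minimality of $u$, is equivalent to your invocation of Hermite's inequality $\operatorname{covol}(\Lambda)\ge\frac{\sqrt{3}}{2}\lVert u\rVert^2$ because $d_\ell=\operatorname{covol}(\Lambda)/\lVert u\rVert$, and your exact segment-length formulas combined with the identities $\frac{3}{2}\lVert u\rVert^2-(u_1-u_3)(u_2-u_3)=(u_1-u_2)^2$ (etc.) play the role of the paper's monotonicity argument for $\cot\theta-\cot\left(\theta+\frac{\pi}{3}\right)$. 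What your version buys is that all lengths are explicit and both signature sectors are handled by a single computation, whereas the paper must repeat the cone argument for $\delta_2$; the cost is some coordinate bookkeeping that the paper avoids by staying on the primal side. I verified your reduction of the trace conditions to $\tr(\delta)=0$ and $w\cdot u=1$, the identification of $\{\rho(\delta):\delta\in\ok{\vee},\ \tr(\delta)=0\}$ with $\Lambda^{\#}$, the two length formulas, and the two quadratic identities; all are correct, so the argument is complete.
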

	
	Showing this claim will immediately prove Lemma \ref{lem:pointsonlineareunits} in the following way. If $\alpha \in \ok{}$ satisfies $\rho(\alpha) \in \ell_{\rho(\beta),1} = \ell_{u,1}$, then $\alpha \in \ok{(1,1,-1)}$ and $\tr(\delta_1\alpha) = 1$ for some $\delta_1 \in \ok{\vee,(1,1,-1)}$. Thus by Lemma \ref{lem:genlem4.6ofKY}, every $\alpha\in\ok{}$ with $\rho(\alpha)\in\ell_{u,1}$ should be a unit. A similar argument can be applied to conclude that every $\alpha \in \ok{}$ with $\rho(\alpha) \in \ell_{u,2}$ should be a unit. Therefore we now focus on proving the above claim for the rest of this section.

	Now we show that there exists $\delta_1\in\ok{\vee,(1,1,-1)}$ satisfying \eqref{eqn:conddelta_1} by considering a specific linear functional $g:\Lambda \ra \z$ and taking $\delta_1$ corresponding to $f(g)=g\circ p \circ \rho:\ok{}\ra \z$ via the bijection in Lemma \ref{lem:bij-codiff-linfunctional}. Since $u$ is a primitive vector of $\Lambda$, it can be extended to a basis of $\Lambda$, say $\Lambda=\z u + \z v$ for some $v\in\Lambda$.

	Let $\ell$ be the line in $H$ passing through $v$ and parallel to $u$, and consider the cones $C_0,C_1$, and $C_2$ in $H$ defined by 
	\[
	\begin{aligned}
		C_0&:=\left\{(y_1,y_2,y_3) \in H : y_1>y_2>y_3 \right\},\\
		C_1&:=\left\{(y_1,y_2,y_3) \in H : y_1>y_3>y_2  \right\},\\
		C_2&:=\left\{(y_1,y_2,y_3) \in H : y_3>y_1>y_2  \right\}.
	\end{aligned}
	\]
	Note that $u\in C_0$ since we are assuming $u_1>u_2>u_3$. Also all of $C_0,C_1$, and $C_2$ are cones formed by two rays forming angle of $\frac{\pi}{3}$. Replacing $v$ with $-v$ if necessary, we may assume that the line $\ell$ passes both $C_1$ and $C_2$ (see Figure \ref{fig:planeH}).
	
	\begin{figure}
		\centering
		\begin{tikzpicture}
			\filldraw (0,0) circle (2pt) node[below]{$\mathbf{0}$};
			\filldraw (1.5,-0.5) circle(2pt) node[right]{$u$};
			\filldraw[thin] (-2.1,2.4) node[above]{$\ell$}--(0,1.7)circle(2pt) node[above]{$v$}--(1.5,1.2) circle(2pt) node[right]{$v_1=v+k_1u$}--(3,0.7);
			
			\draw[->] (0,0)--(1.455,1.164);
			\draw[->] (0,0)--(1.455,-0.485);
			\draw [dotted] (0,-1.4) --(0,1.4) node[pos=0.9]{$C_2$};
			\draw [dotted] (-1.212428,-0.7)--(1.212428,0.7) node[pos=0.9]{$C_1$};
			\draw [dotted] (-1.212428,0.7) --(1.212428,-0.7) node[pos=0.9]{$C_0$};
			
			\draw [thin] (-2,0) node[left]{$y_2=y_3$} --(2,0) ;
			\draw [thin] (-1,-1.73204)--(1,1.73204) node[above]{$\hspace{1cm}y_1=y_3$};
			\draw [thin] (-1,1.73204) --(1,-1.73204)
			node[below]{$\hspace{1cm}y_1=y_2$};
		\end{tikzpicture}
		$\hspace{1cm}$
		\begin{tikzpicture}
			\fill[fill=yellow] (0,0)--(1.905244,-1.1)--(0,-2.2);
			\filldraw (0,0) circle (2pt) node[below]{$\mathbf{0}$};
			\filldraw (1.5,1.2) circle(2pt) node[right]{$v$};
			\filldraw (0.470588,-0.588235) circle (2pt) node[above]{$\delta_1$};
			
			\draw[->] (0,0)--(1.455,1.164);
			\draw[->] (0,0)--(0.4470586,-0.55882325);
			\draw [dotted] (0,-1.4) node[below]{$(0,1,-1)$}--(0,1.4);
			\draw [dotted] (-1.212428,-0.7)--(0,0) node[pos=1.9]{$C_1$};
			\draw [dotted] (-1.212428,0.7) --(1.212428,-0.7)node[right]{$(1,0,-1)$};
			
			\draw [thin] (-2,0)--(2,0) node[right]{$(2,-1,-1)$};
			\draw [thin] (-1,-1.73204)--(1,1.73204) node[above]{$\hspace{1cm}(1,-2,1)$};
			\draw [thin] (-1,1.73204)--(0,0)
			node[pos=1.7]{$D_1$};
		\end{tikzpicture}
		\caption{Description of cones, points, and a line on plane $H$}\label{fig:planeH}
	\end{figure}
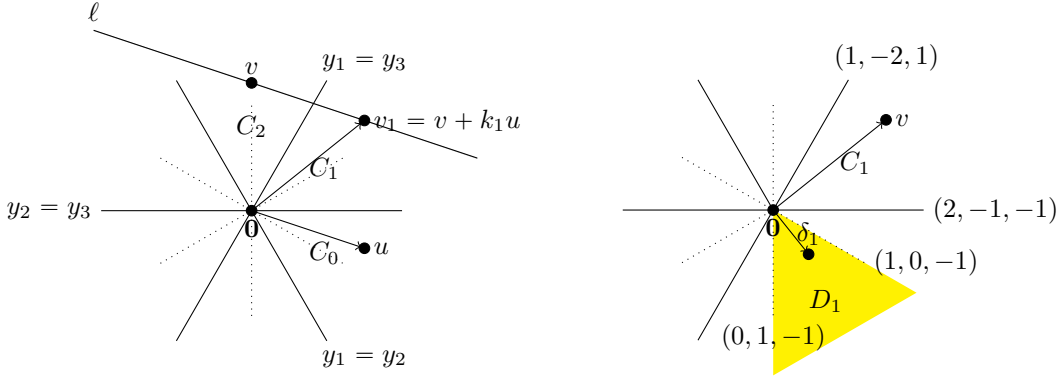
	
	We show that the distance $d_\ell$ from the origin to the line $\ell$ on $H$ is at least $\frac{\sqrt{3}}{2}\lVert u \rVert$. Suppose on the contrary that $d_{\ell} <\frac{\sqrt{3}}{2}\lVert u \rVert$. One can calculate the segment $s:=\ell\cap \{y\in H : \lVert y\rVert < \lVert u \rVert\}$ has length $2\sqrt{\|u\|^2 - d_{\ell}^2}$, so that it is longer than  $\|u\|$. Noting that the vectors of the form $v+ku\in\Lambda$ ($k\in\z$) are placed on $\ell$ in every $\lVert u \rVert$ distance, there should be at least one vector $v+k_0u$ on $s$ for some $k_0\in\z$. Then  by the definition of $s$ we have $\lVert v+k_0u \rVert < \lVert u \rVert$, which contradicts to the fact that $u$ is a non-zero vector of the smallest norm of $\Lambda$. Therefore, we have $d_\ell\ge \frac{\sqrt{3}}{2}\lVert u \rVert$.
	
	Now we claim that the length of the segment $s_1:= \ell \cap C_1$ is at least $\lVert u \rVert$. Denoting $\theta$ the angle formed by the lines $\ell$ and $y_2=y_3$, we can calculate
	\[
	|\ell \cap C_1 | = \left( \cot(\theta) - \cot\left(\theta +\frac{\pi}{3}\right) \right) d_l.
	\]
	Noting that $0<\theta < \frac{\pi}{3}$ and that the function $\cot(\theta)- \cot\left(\theta + \frac{\pi}{3}\right)$ is decreasing in $0\le\theta\le\frac{\pi}{3}$, we have $ |\ell \cap C_1| > \frac{2}{\sqrt{3}} d_l \ge \|u\|$. Therefore, there exists a vector $v_1:=v+k_1u\in\Lambda$ for some $k_1\in \z$ lying on $\ell\cap C_1$.
	
	Hence we have obtained another basis $\{u,v_1\}$ of $\Lambda$ such that $v_1\in C_1$. Consider the linear functional $g_1:\Lambda \ra \z$ defined by $g(u)=1$ and $g(v_1)=0$, and let $\delta_1$ be the element in $\ok{\vee}$ corresponding to $f(g_1)$ via the bijection in Lemma \ref{lem:bij-codiff-linfunctional}, that is, $\delta_1\in\ok{\vee}$ satisfying $f(g_1)(\alpha)=\tr(\delta_1 \alpha)$ for all $\alpha \in \ok{}$. Then $\rho(\delta_1)\in H$ since $\tr(\delta_1)=0$ by Lemma \ref{lem:bij-codiff-linfunctional}, and it satisfies \eqref{eqn:conddelta_1} since for any $\alpha\in\ok{}$ with $\rho(\alpha)\in \ell_u$, we have
	\[
	\tr(\delta_1 \alpha)=f(g_1)(\alpha)=g_1\circ p\circ\rho(\alpha)=g_1(u)=1.
	\]
	Moreover, letting $\alpha_1\in \ok{}$ be such that $p(\rho(\alpha_1))=v_1$, and writing $v_1=\rho(\alpha_1)+t\cdot \rho(1)$ for some $t\in\mathbb{R}$, we have
	\[
	\rho(\delta_1)\cdot v_1=\tr(\delta_1\alpha_1)+t\rho(\delta_1)\cdot\rho(1)=\tr(\delta_1 \alpha_1)+t\tr(\delta_1)=f(g_1)(\alpha_1)+t\cdot0=g_1(v_1)+ 0=0.
	\] 
	Hence $\rho(\delta_1)$ is orthogonal to $v_1$. Similarly, one may show that $\rho(\delta_1)\cdot u=g_1(u)=1>0$. Therefore, $\rho(\delta_1)$ is contained in the cone
	\[
	D_1:=\left\{(y_1,y_2,y_3) \in H : (y_1,y_2,y_3)\cdot (-1,2,-1)>0,\ (y_1,y_2,y_3)\cdot (2,-1,-1)>0  \right\},
	\]
	which can be obtained by rotating the cone $C_1$ by $\frac{\pi}{2}$ on $H$ (see Figure \ref{fig:planeH}). For any $(y_1,y_2,y_3)\in H$, noting that $y_1+y_2+y_3=0$, the conditions defining $D_1$ are equivalent to $3y_2>0$ and $3y_1>0$, and hence $y_3<0$. Therefore, we have $\delta_1\in\ok{\vee,(1,1,-1)}$. 
	
	One may find $\delta_2\in\ok{\vee,(1,-1,-1)}$ satisfying \eqref{eqn:conddelta_2} by following a similar argument by replacing $C_1$ and $D_1$ with $C_2$ and $D_2:=\left\{(y_1,y_2,y_3) \in H : (y_1,y_2,y_3)\cdot (1,-2,1)>0,\ (y_1,y_2,y_3)\cdot (1,1,-2)>0  \right\}$, respectively. This completes the proof of Lemma \ref{lem:pointsonlineareunits}.

	\section{Real biquadratic field case}\label{sec:biquadratic}
	
	In this section, we prove that there is no real biquadratic field $K$ that admits a $\z$-form that is universal over $\ok{}$.
	
	Let $p,q>1$ be two distinct square-free integers, and let $r=\frac{pq}{\gcd(p,q)^2}$. By a real biquadratic field, we mean the field of the form $K=\q(\sqrt{p},\sqrt{q})$. It is of degree $4$ over $\q$ and $(1,\sqrt{p},\sqrt{q},\sqrt{r})$ is a $\q$-basis of $K$. There are four embeddings of $K$ into $\mathbb{C}$, defined for $\alpha=x+y\sqrt{p}+z\sqrt{q}+w\sqrt{r}\in K$ by
	\[
	\begin{aligned}
		\rho_1(\alpha)=x+y\sqrt{p}+z\sqrt{q}+w\sqrt{r},\\
		\rho_2(\alpha)=x-y\sqrt{p}+z\sqrt{q}-w\sqrt{r},\\
		\rho_3(\alpha)=x+y\sqrt{p}-z\sqrt{q}-w\sqrt{r},\\
		\rho_4(\alpha)=x-y\sqrt{p}-z\sqrt{q}+w\sqrt{r}.
	\end{aligned}
	\]
	An integral basis of $\ok{}$ is given by \cite[Theorem 2]{W} according as $(p,q)\Mod{4}$. After suitably interchanging the role of $p,q,$ and $r$, any case can be converted into one of the following five cases (see also \cite[Section 2.2]{KTZ}):
	\[
	\begin{array}{lll}
		\text{Case }1&\left\{ 1, \sqrt{p}, \sqrt{q}, \frac{\sqrt{p}+\sqrt{r}}{2}\right\} &\text{with } (p,q)\equiv(2,3)\Mod{4}\\
		
		\text{Case }2&	\left\{ 1, \sqrt{p}, \frac{1+\sqrt{q}}{2}, \frac{\sqrt{p}+\sqrt{r}}{2}\right\} &\text{with } (p,q)\equiv(2,1)\Mod{4}\\
		
		\text{Case }3&	\left\{ 1, \sqrt{p}, \frac{1+\sqrt{q}}{2}, \frac{\sqrt{p}+\sqrt{r}}{2}\right\}	 &\text{with } (p,q)\equiv(3,1)\Mod{4}\\
		
		\text{Case }4&	\left\{ 1, \frac{1+\sqrt{p}}{2}, \frac{1+\sqrt{q}}{2}, \frac{1+\sqrt{p}+\sqrt{q}+\sqrt{r}}{4}\right\} & \text{with } (p,q)\equiv(1,1)\Mod{4}, \ \gcd(p,q)\equiv 1\Mod{4}\\
		
		\text{Case }5&	\left\{ 1, \frac{1+\sqrt{p}}{2}, \frac{1+\sqrt{q}}{2}, \frac{1-\sqrt{p}+\sqrt{q}+\sqrt{r}}{4}\right\} &\text{with } (p,q)\equiv(1,1)\Mod{4}, \ \gcd(p,q)\equiv 3\Mod{4}\\
	\end{array}
	\]
	
	\begin{lem}\label{lem:basisofcodiff} According to the basis of $\ok{}$ given above, the dual basis of $\ok{\vee}$ is given as follows:
		\[
		\begin{array}{lll}
			\text{Case }1&\left\{ \frac{1}{4}, \frac{1}{4\sqrt{p}}-\frac{1}{4\sqrt{r}}, \frac{1}{4\sqrt{q}}, \frac{1}{2\sqrt{r}}\right\} 
			\\
			
			\text{Case }2\&3 &	\left\{ \frac{1}{4}-\frac{1}{4\sqrt{q}}, \frac{1}{4\sqrt{p}}-\frac{1}{4\sqrt{r}}, \frac{1}{2\sqrt{q}}, \frac{1}{2\sqrt{r}}\right\} 
			\\
			
			\text{Case }4&	\left\{ \frac{1}{4}-\frac{1}{4\sqrt{p}}-\frac{1}{4\sqrt{q}}+\frac{1}{4\sqrt{r}}, \frac{1}{2\sqrt{p}}-\frac{1}{2\sqrt{r}}, \frac{1}{2\sqrt{q}}-\frac{1}{2\sqrt{r}}, \frac{1}{\sqrt{r}}\right\}
			
			\\
			
			\text{Case }5&	\left\{ \frac{1}{4}-\frac{1}{4\sqrt{p}}-\frac{1}{4\sqrt{q}}-\frac{1}{4\sqrt{r}}, \frac{1}{2\sqrt{p}}+\frac{1}{2\sqrt{r}}, \frac{1}{2\sqrt{q}}-\frac{1}{2\sqrt{r}}, \frac{1}{\sqrt{r}}\right\} 
			
			\\
		\end{array}
		\]	
	\end{lem}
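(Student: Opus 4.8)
The plan is to reduce the entire computation to a single structural observation: the trace pairing $B_{\tr}$ is diagonal with respect to the $\q$-basis $e_1,\dots,e_4$ of $K$ given by $1,\sqrt p,\sqrt q,\sqrt r$. Writing $g=\gcd(p,q)$, the defining relation $r=pq/g^2$ yields the multiplication rules $\sqrt p\,\sqrt q=g\sqrt r$, $\sqrt p\,\sqrt r=(p/g)\sqrt q$, and $\sqrt q\,\sqrt r=(q/g)\sqrt p$, where $p/g$ and $q/g$ are integers since $g\mid p$ and $g\mid q$. Thus the product of any two distinct elements among $1,\sqrt p,\sqrt q,\sqrt r$ is an integer multiple of a single $\sqrt{\,\cdot\,}$, so it has vanishing rational part. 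Since the four embeddings show that $\tr(x+y\sqrt p+z\sqrt q+w\sqrt r)=4x$ is four times the rational part, I would conclude $\tr(e_ie_j)=0$ for $i\neq j$ and $\tr(e_i^2)\in\{4,4p,4q,4r\}$; that is, the Gram matrix of $(1,\sqrt p,\sqrt q,\sqrt r)$ is $\diag(4,4p,4q,4r)$.

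From this diagonal form the dual basis of $(1,\sqrt p,\sqrt q,\sqrt r)$ is immediately $(\tfrac14,\tfrac{1}{4\sqrt p},\tfrac{1}{4\sqrt q},\tfrac{1}{4\sqrt r})$, using $\sqrt p/(4p)=1/(4\sqrt p)$ and its analogues. The remaining work is pure linear algebra, carried out once for each of the five cases. For a fixed case I would record the change-of-basis matrix $M$ expressing the given integral basis $(x_1,x_2,x_3,x_4)$ in the coordinates $1,\sqrt p,\sqrt q,\sqrt r$; then the coordinates of the dual basis $(y_1,\dots,y_4)$ in the dual coordinates $(\tfrac14,\tfrac{1}{4\sqrt p},\tfrac{1}{4\sqrt q},\tfrac{1}{4\sqrt r})$ are the rows of $(M^{T})^{-1}$. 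Equivalently, and perhaps cleanest for exposition, one verifies directly that the proposed elements $y_j$ satisfy $\tr(x_iy_j)=\delta_{ij}$ using the diagonal Gram matrix; since a dual basis is unique and, as recalled in Section~\ref{subsec:notations}, any dual basis is a $\z$-basis of $\ok{\#}=\ok{\vee}$, this simultaneously identifies the $y_j$ and shows they generate the codifferent.

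I would then push this computation through the five cases. In each one $M$ is (after a harmless reordering) triangular with entries in $\{0,\tfrac14,\tfrac12,1\}$, so inverting $M^{T}$ is elementary. For instance, in Case~1 the only nontrivial interaction comes from $x_4=\tfrac{\sqrt p+\sqrt r}{2}$, which forces the $\sqrt p$- and $\sqrt r$-coordinates to mix and produces the entries $\tfrac1{4\sqrt p}-\tfrac1{4\sqrt r}$ and $\tfrac1{2\sqrt r}$; in the denser Cases~4 and~5 all four coordinates mix, yielding the fully spread-out first vector together with the rescaled vectors $\tfrac1{2\sqrt p}\pm\tfrac1{2\sqrt r}$, $\tfrac1{2\sqrt q}-\tfrac1{2\sqrt r}$, and $\tfrac1{\sqrt r}$.

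The main obstacle here is not conceptual but organizational: one must keep the sign conventions of the embeddings and the factor $g=\gcd(p,q)$ straight across the five congruence cases, and be careful that Cases~4 and~5 differ only by the sign of $\sqrt p$ inside the fourth basis vector. This single sign propagates through $(M^T)^{-1}$ to the sign of the $\tfrac1{2\sqrt p}$ term (giving $\tfrac1{2\sqrt p}-\tfrac1{2\sqrt r}$ versus $\tfrac1{2\sqrt p}+\tfrac1{2\sqrt r}$) and to the $\sqrt r$-sign in the first dual vector, which is exactly the discrepancy between the stated answers for those two cases. Once the diagonal Gram matrix is established, no deeper difficulty remains, and the five verifications are routine.
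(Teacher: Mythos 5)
Your proposal is correct and follows essentially the same route as the paper: the paper's proof is a direct verification that $\tr(x_i y_j)=\delta_{ij}$ using $\tr(1)=4$ and $\tr(\sqrt{p})=\tr(\sqrt{q})=\tr(\sqrt{r})=0$, which is exactly your diagonal Gram matrix observation (your explicit note that products of distinct square roots are again rational multiples of square roots, hence traceless, is the same fact the paper leaves implicit). The change-of-basis/$(M^{T})^{-1}$ packaging is a cosmetic reorganization of the same computation.
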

	\begin{proof}
		Recall that $\ok{\vee}$ is by definition the dual lattice of $(\ok{},B_{\tr})$. Thus we need only to check that the basis given in the lemma is the dual basis of $\ok{}$ given above for each case, which may easily be verified from straightforward computation of the trace form by noting that $\tr(1)=4$ and $\tr(\sqrt{p}) = \tr(\sqrt{q}) = \tr(\sqrt{r})=0.$
	\end{proof}
	
	\begin{thm}\label{thm:z-formbiquadfield}
		There does not exist a $\z$-form that is universal over a real biquadratic field $K$.
	\end{thm}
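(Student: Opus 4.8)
The plan is to argue by contradiction using Lemma \ref{lem:genlem4.6ofKY}. Since $d=[K:\q]=4\le 43$, that lemma applies: if $K$ admits a $\z$-form universal over $\ok{}$, then any $\alpha\in\ok{\epsilon}$ for which some $\delta\in\ok{\vee,\epsilon}$ satisfies $\tr(\delta\alpha)=1$ must be a unit. Hence it suffices to exhibit, for every real biquadratic $K$, a single nonunit $\alpha\in\ok{}$ together with a $\delta\in\ok{\vee}$ such that $\operatorname{Sgn}(\alpha)=\operatorname{Sgn}(\delta)$ and $\tr(\delta\alpha)=1$. Such a pair forces $\alpha$ to be a unit, a contradiction, and thereby proves Theorem \ref{thm:z-formbiquadfield}.

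To produce the pair I would work directly with the explicit integral bases $\alpha_1,\dots,\alpha_4$ in the five cases and their dual bases $y_1,\dots,y_4$ from Lemma \ref{lem:basisofcodiff}. The crucial simplification is that $\tr(\alpha_i y_j)=\delta_{ij}$, so for any basis element $\alpha_j$ one automatically has $\tr(y_j\alpha_j)=1$; the whole problem then reduces to locating an index $j$ with (i) $\alpha_j$ not a unit and (ii) $\operatorname{Sgn}(\alpha_j)=\operatorname{Sgn}(y_j)$. For the signature bookkeeping I would record the fixed values $\operatorname{Sgn}(\sqrt p)=(1,-1,1,-1)$, $\operatorname{Sgn}(\sqrt q)=(1,1,-1,-1)$, $\operatorname{Sgn}(\sqrt r)=(1,-1,-1,1)$, and note that a positive rational multiple of one of these (for instance $\tfrac{1}{4\sqrt q}$) inherits its signature, whereas a difference such as $\tfrac{1}{2\sqrt p}-\tfrac{1}{2\sqrt r}$ has signature depending on whether $p<r$ or $p>r$.

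Carrying this out, Case $1$ is immediate: taking $\alpha=\sqrt q=\alpha_3$ and $\delta=\tfrac{1}{4\sqrt q}=y_3$, both have signature $(1,1,-1,-1)$, one computes $\tr(\delta\alpha)=\tr(\tfrac14)=1$, and $\sqrt q$ is never a unit since $N(\sqrt q)=q^2>1$. In the other four cases no square-root direction is integral in the given basis, so I would instead use a half-integer generator, $\tfrac{1+\sqrt p}{2}$ or $\tfrac{1+\sqrt q}{2}$, whose signatures are the clean $(1,-1,1,-1)$ and $(1,1,-1,-1)$; the matching dual vectors $y_2=\tfrac{1}{2\sqrt p}-\tfrac{1}{2\sqrt r}$ and $y_3=\tfrac{1}{2\sqrt q}-\tfrac{1}{2\sqrt r}$ then lie in the same orthant exactly when the relevant square-free part is less than $r$. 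Exploiting the freedom to name the three quadratic subfields of $K$ (equivalently, to relabel the square-free parts $p,q,r$) so that $r$ is not the smallest, one arranges this magnitude inequality, after which $\tr(\delta\alpha)=1$ holds by duality.

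The main obstacle is entirely about selecting the witness correctly rather than any new idea, and it is twofold. First, because the dual-basis vectors are $\z$-combinations of $\tfrac{1}{\sqrt p},\tfrac{1}{\sqrt q},\tfrac{1}{\sqrt r}$, their signatures are controlled by the relative sizes of $p,q,r$, and one must confirm (after a suitable relabeling) that the chosen $y_j$ genuinely shares the orthant of $\alpha_j$. Second, the natural generator $\tfrac{1+\sqrt q}{2}$ collapses to a unit precisely when the relevant square-free part equals $5$, since then $N\big(\tfrac{1+\sqrt q}{2}\big)=\big(\tfrac{1-q}{4}\big)^2=1$ (the golden-ratio phenomenon); these finitely many exceptional labelings I would dispatch separately, either by relabeling so that a different nonunit half-integer generator serves as $\alpha$, or by passing to a product such as $\tfrac{1+\sqrt q}{2}\,\sqrt p$, whose norm is $p^2\ne\pm 1$ and which remains supported in a controllable orthant. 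Once a valid pair is pinned down in each of the five normal-form cases, Lemma \ref{lem:genlem4.6ofKY} closes the argument.
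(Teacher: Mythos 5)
Your overall strategy is exactly the paper's: apply Lemma \ref{lem:genlem4.6ofKY} after exhibiting, in each of the five normal forms, a nonunit $\alpha\in\ok{}$ and a $\delta\in\ok{\vee}$ of the same signature with $\tr(\delta\alpha)=1$, read off from the explicit dual bases of Lemma \ref{lem:basisofcodiff}. Your Case 1 witness is identical to the paper's, and your treatment of Cases 4 and 5 (relabel $p,q,r$ so that the chosen $a$ satisfies $a<b$ for some $b$, pair $\tfrac{1+\sqrt a}{2}$ with $\tfrac{1}{2\sqrt a}-\tfrac{1}{2\sqrt b}$, and note that the unit exception forces $a=5$, which can happen for at most one label) is the same argument the paper makes via the permutation $(a,b,c)$ of $(p,q,r)$ and Lemma \ref{lem:elementsign}.

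The gap is in Cases 2 and 3, and it stems from a false premise: you assert that ``no square-root direction is integral in the given basis'' outside Case 1, but $\sqrt p$ \emph{is} a basis element of $\ok{}$ in Cases 2 and 3 (and $\sqrt r=2\cdot\tfrac{\sqrt p+\sqrt r}{2}-\sqrt p\in\ok{}$ as well). This forecloses the paper's clean witness there, namely $\alpha=\sqrt p$ with $\delta=\tfrac{1}{4\sqrt p}-\tfrac{1}{4\sqrt r}$ when $p<r$, and $\alpha=\sqrt r$ with $-\delta$ when $p>r$, which never produces a unit since $N(\sqrt p)=p^2$. Your substitute witness $\tfrac{1+\sqrt q}{2}$ paired with $\tfrac{1}{2\sqrt q}$ does work whenever $q\neq5$, but the exceptional case $q=5$ is not actually dispatched by your two fallbacks. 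Relabeling is unavailable: in Cases 2 and 3 the square-free part $q$ is the unique one congruent to $1\bmod 4$, so no other half-integer generator $\tfrac{1+\sqrt a}{2}$ lies in $\ok{}$. And the product $\tfrac{1+\sqrt 5}{2}\sqrt p=\tfrac{\sqrt p+\sqrt{5p}}{2}$ is not ``supported in a controllable orthant'' for free: its signature and the existence of a matching $\delta$ depend on which of the two non-$5$ square-free parts you name $p$. For instance, in $K=\q(\sqrt{10},\sqrt 5)$ presented with $p=10$, $q=5$, $r=2$, the element $\tfrac{1+\sqrt5}{2}\sqrt{10}=\tfrac{\sqrt{10}+5\sqrt2}{2}$ has signature $(1,-1,-1,1)$, and a short computation with the dual basis shows \emph{no} $\delta\in\ok{\vee,(1,-1,-1,1)}$ satisfies $\tr(\delta\alpha)=1$; the construction only succeeds with the labeling $p=2$, $r=10$, where the product becomes the basis element $\tfrac{\sqrt p+\sqrt r}{2}$ paired with $\tfrac{1}{2\sqrt r}$. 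So to close Cases 2 and 3 you must either verify this labeling-dependent signature computation explicitly, or simply drop the false premise and use $\sqrt p$ (or $\sqrt r$) as the paper does.
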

	
	\begin{proof}
		Assume to the contrary that there is a real biquadratic field $K=\q\left(\sqrt{p},\sqrt{q}\right)$ that admit a $\z$-form universal over $\ok{}$. We prove the theorem by obtaining a contradiction case-by-case according to the shape of an integral basis of $\ok{}$. General idea is to find explicit $\alpha$ and $\delta$ that satisfy Lemma \ref{lem:genlem4.6ofKY} using our descriptions of basis given in Lemma \ref{lem:basisofcodiff}.\\
		
		\noindent {\bf (Case 1)} $\ok{}=\z 1+\z  \sqrt{p}+\z \sqrt{q}+ \z \left(\frac{\sqrt{p}+\sqrt{r}}{2}\right)$
		
		Consider an element $\alpha=\sqrt{q}\in\ok{}$ and let $\delta=\frac{1}{4\sqrt{q}}$. Noting that $\rho(\sqrt{q})=(\sqrt{q},\sqrt{q},-\sqrt{q},-\sqrt{q})$ and by Lemma \ref{lem:basisofcodiff}, we have
		\[
		\alpha \in \ok{(1,1,-1,-1)}, \ \delta\in \ok{\vee,(1,1,-1,-1)} \ \text{ and } \tr(\delta\alpha)=1.
		\]
		Hence by Lemma \ref{lem:genlem4.6ofKY}, $\alpha$ should be a unit. However, $N(\alpha)=q^2\neq \pm1$, yielding a contradiction.\\
		
		\noindent {\bf (Case 2\&3)} $\ok{}=\z 1 + \z \sqrt{p}+ \z\left( \frac{1+\sqrt{q}}{2}\right)+\z \left(\frac{\sqrt{p}+\sqrt{r}}{2}\right)$.
		
		We have either $p<r$ or $p>r$. If $p<r$, we consider
		\[
		\alpha=\sqrt{p}\in \ok{(1,-1,1,-1)} \text{ and } \delta=\frac{1}{4\sqrt{p}}-\frac{1}{4\sqrt{r}}\in \ok{\vee,(1,-1,1,-1)}.
		\]
		Lemma \ref{lem:basisofcodiff} implies that $\tr(\alpha\delta)=1$. Hence by Lemma \ref{lem:genlem4.6ofKY}, $\alpha$ is a unit. Thus we have $1=N(\alpha)=p^2$, which is impossible. If $p>r$, we consider
		\[
		\alpha'=\sqrt{r}\in \ok{(1,-1,-1,1)},\ \delta'=-\frac{1}{4\sqrt{p}}+\frac{1}{4\sqrt{r}}\in \ok{\vee,(1,-1,-1,1)} \ \text{ and } \tr(\delta'\alpha')=1.
		\]
		Hence by Lemma \ref{lem:genlem4.6ofKY}, $\alpha'$ is a unit. Thus we have $1=N(\alpha')=r^2$, which is impossible.\\
		
		\noindent {\bf (Case 4\&5)} $\ok{}=\z 1 + \z\left( \frac{1+\sqrt{p}}{2}\right)+ \z\left( \frac{1+\sqrt{q}}{2}\right)+\z\left( \frac{1+\mu \sqrt{p}+\sqrt{q}+\sqrt{r}}{2}\right)$ with $\mu \in \{1,-1\}$.
		
		Let $(a,b,c)$ be any permutation of $(p,q,r)$. In both cases, it always holds that
		\[
		\alpha_a = \frac{1+\sqrt{a}}{2} \in \ok{}, \quad \delta_{ab} = \frac{1}{2 \sqrt{a}} - \frac{1}{2\sqrt{b}} \in \ok{\vee} \text{ and }\tr(\alpha_a \delta_{ab})=1.
		\]
		We show that if $a<b$ then $\alpha_a$ and $\delta_{ab}$ have same signature in $K$. We note the following simple lemma.
		\begin{lem}\label{lem:elementsign} Let $u, v \in K$ be nonzero elements satisfying $|\rho_i(u)|>|\rho_i(v)|$ for any embedding $\rho_i$. Then $u+v$ and $u$ have the same signature.
		\end{lem}
		\begin{proof}
			For any embedding $\rho_i$, $\rho_i(u+v) = \rho_i(u) + \rho_i(v)$ has the same sign as $\rho_i(u)$. This shows $\operatorname{Sgn}(u+v)=\operatorname{Sgn}(u)$.
		\end{proof}
		This lemma shows that
		\[
		\operatorname{Sgn}\left(\frac{\sqrt{a}+1}{2}\right) =\operatorname{Sgn}\left(\frac{\sqrt{a}}{2}\right)= \operatorname{Sgn}\left(\frac{1}{2\sqrt{a}} \right)
		=\operatorname{Sgn}\left(\frac{1}{2\sqrt{a}} - \frac{1}{2\sqrt{b}} \right). 
		\]
		So there exists some signature $\epsilon \in \{-1,1\}^4$ such that
		\[
		\alpha_a = \frac{1+\sqrt{a}}{2} \in \ok{\epsilon}, \quad \delta_{ab} = \frac{1}{2 \sqrt{a}} - \frac{1}{2\sqrt{b}} \in \ok{\vee,\epsilon} \text{ and }\tr(\alpha_a \delta_{ab})=1.
		\]
		Applying Lemma \ref{lem:genlem4.6ofKY} shows $\alpha_a$ is a unit, so from $N(\alpha_a) = \left(\frac{1-a}{4}\right)^2 = \pm 1$ we obtain $a=5$. Meanwhile, if we assume $a<b<c$, one can similarly show $\alpha_b$ is also a unit so $b=5$, which is a contradiction to that $a,b,c$ are all distinct.\\
		
		In all cases, we obtain a contradiction. This completes the proof of the theorem.
	\end{proof}

\end{document}